\newenvironment{poliabstract}[1]
  {\begin{abstract}}
  {\end{abstract}}
\newtheoremstyle{theoremnoperiod}
  {\topsep}   
  {\topsep}   
  {\normalfont}  
  {0pt}       
  {\bfseries} 
  {}          
  {5pt plus 1pt minus 1pt} 
  {}          
\newtheorem*{theorem*}{Théorème}
\newtheorem{theorem}{Théorème}[section]
\newtheorem{lemma}[theorem]{Lemme}
\newtheorem*{conjecture*}{Conjecture}
\newtheorem{property}[theorem]{Proposition}
\numberwithin{equation}{section}
\theoremstyle{theoremnoperiod}
\newtheorem*{thmnodot*}{Théorème}
\DeclareMathOperator{\li}{li}
\DeclareMathOperator{\erf}{erf}
\DeclareMathOperator{\erfc}{erfc}
\DeclareMathOperator{\modulo}{mod}
\DeclareMathOperator{\N}{\mathbb{N}}
\DeclareMathOperator{\R}{\mathbb{R}}
\DeclareMathOperator{\C}{\mathbb{C}}
\DeclareMathOperator{\gC}{\mathfrak{C}}
\DeclareMathOperator{\gI}{\mathfrak{I}}
\DeclareMathOperator{\gq}{\mathfrak{q}}
\DeclareMathOperator{\gS}{\mathfrak{S}}
\DeclareMathOperator{\ga}{\mathfrak{a}}
\DeclareMathOperator{\gc}{\mathfrak{c}}
\def\gS{{\mathfrak S}}
\def\gr{{\mathfrak r}}
\def\HH{{\mathscr H}}
\DeclareMathOperator{\e}{\rm e}
\def\d{\,{\rm d}}
\DeclareMathOperator{\PP}{\mathscr P}
\DeclareMathOperator{\A}{\mathscr A}
\DeclareMathOperator{\J}{\mathscr J}
\DeclareMathOperator{\I}{\mathscr I}
\DeclareMathOperator{\K}{\mathscr K}
\DeclareMathOperator{\sZ}{\mathscr Z}
\def\p{{p_{{m},\nu}}}
\newcommand{\sset}{\smallsetminus}
\renewcommand{\leq}{\leqslant}
\renewcommand{\geq}{\geqslant}
\newcommand{\phistar}{{\varphi*}}
\definecolor{vert}{rgb}{0,0.5,0}
\definecolor{violet}{rgb}{0.7,0.1,0.8}
\title{Étude statistique du facteur premier médian, 1 : \goodbreak valeur moyenne}
\author{Jonathan Rotgé\thanks{Adresse e-mail : jonathan.rotge@etu.univ-amu.fr\\ \quad 2020 {\it Mathematics Subject Classification}: 11N25, 11N37.\\ {\it Key words and phrases.} middle prime factor, mean value.}\\ \\ {\it\small  Université d'Aix-Marseille,\, Institut de Mathématiques de Marseille CNRS UMR 7373,}\\ {\it\small 163 Avenue De Luminy, Case 907, 13288 Marseille Cedex 9, FRANCE}}
\date{}
\begin{document}
\maketitle
\thispagestyle{empty}

\selectlanguage{english}
\begin{poliabstract}{Abstract} 
We provide an asymptotic expansion for the mean-value of the logarithm of the middle prime factor of an integer, defined  according  to multiplicity or not, thus generalising a recent study of McNew, Pollack, and Roy. This yields an improvement of the asymptotic estimate, in particular by furnishing an optimal remainder when the expansion is truncated at the first order.
\end{poliabstract}

\selectlanguage{french}
\begin{poliabstract}{Résumé}
   Nous fournissons un développement asymptotique pour la valeur moyenne du logarithme du facteur premier médian d'un entier, défini en tenant compte ou non, de la multiplicité, généralisant ainsi une étude récente de McNew, Pollack et Roy. Ceci induit une amélioration de l'estimation asymptotique, en fournissant notamment un reste optimal lorsque le développement est tronqué au premier ordre.
\end{poliabstract}
\bigskip



\section{Introduction et énoncé du résultat}

Pour tout entier naturel $n\geq 2$, posons
\[\omega(n):=\sum_{p|n}1,\quad\Omega(n):=\sum_{p^k||n}k,\]
et notons $\nu\in\{\omega,\Omega\}$ l'une ou l'autre de ces fonctions. Si $\{q_j(n)\}_{1\leq j\leq\omega(n)}$ désigne la suite croissante des facteurs premiers de $n$ comptés sans multiplicité et $\{Q_j(n)\}_{1\leq j\leq\Omega(n)}$ celle des facteurs premiers de $n$ comptés avec multiplicité, nous écrivons
\[\p(n):=\begin{cases}q_{\lceil\omega(n)/2\rceil}(n)&{\rm si\;} \nu=\omega\\ Q_{\lceil\Omega(n)/2\rceil}(n)&{\rm si\;} \nu=\Omega\end{cases},\quad P^-(n):=q_1(n),\quad P^+(n):=q_{\omega(n)}(n).\]
Nous dirons qu'un entier $n$ est $y$-friable (respectivement $y$-criblé) s'il vérifie $P^+(n)\leq y$ (respectivement $P^-(n)> y$). Dans toute la suite, les lettres $p$ et $q$ désignent des nombres premiers. Définissons
\begin{equation}\label{defHnu}
	\HH_\nu(z):=
	\begin{cases}
		\displaystyle\prod_{q}\Big(1-\frac{1}{q}\Big)^z\Big(1+\frac{z}{q-1}\Big)&(\nu=\omega,\,z\in\C),\\
		\displaystyle\prod_{q}\Big(1-\frac{1}{q}\Big)^z\Big(1-\frac{z}{q}\Big)^{-1}&(\nu=\Omega,\,\Re z<2).
	\end{cases}
\end{equation}
Notons enfin
\begin{equation}\label{defphianu}
	\varphi:=\tfrac12(1+\sqrt 5),\quad A_\nu:=\frac{\varphi\e^{-\gamma}\HH_\nu(\varphi-1)}{\sqrt 5\Gamma(\varphi)}\approx\begin{cases}
		0,493286&\text{si }\nu=\omega,\\
		0{,}414005&\text{si }\nu=\Omega,
	\end{cases}
\end{equation}
où $\gamma$ désigne la constante d'Euler-Mascheroni. Dans un travail récent \cite{pollack}, McNew, Pollack et Singha Roy s'intéressent à la répartition statistique des facteurs premiers intermédiaires à travers l'estimation de diverses quantités. Ils fournissent notamment (\cite[th. 2.6]{pollack}) une estimation de la valeur moyenne de $\log p_{{m},\Omega}(n)$. Posant
\[S_\nu(x):=\sum_{n\leq x}\log\p(n)\quad(x\geq 3),\]
ils démontrent ainsi que
\[S_{\Omega}(x)=A_{\Omega}x(\log x)^{1/\varphi}\bigg\{1+O\bigg(\frac{(\log_3 x)^{3/2}}{\sqrt{\log_2 x}}\bigg)\bigg\}\quad (x\geq 3).\]
Nous nous proposons ici de généraliser et de préciser ce résultat.\par
\begin{theorem}\label{theoreme_principal}
	Il existe une suite réelle $\{\gc_{\nu,j}\}_{j\in\N^*}$ telle que, pour tout entier $J$ fixé, on ait
	\begin{equation}\label{estlogpm}
		S_\nu(x)=A_\nu x(\log x)^{1/\varphi}\bigg\{1+\sum_{1\leq j\leq J}\frac{\gc_{\nu,j}}{(\log_2 x)^j}+O\bigg(\frac{1}{(\log_2 x)^{J+1}}\bigg)\bigg\}\quad(x\geq3).
	\end{equation}
\end{theorem}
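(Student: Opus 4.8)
The plan is to sort the integers $n\le x$ by their middle prime factor and to reduce $S_\nu(x)$ to a two-dimensional saddle-point integral. Write $S_\nu(x)=\sum_p(\log p)\,N_p(x)$ with $N_p(x):=\#\{n\le x:\p(n)=p\}$. The first ingredient is a \emph{balanced splitting at the median}: if $\p(n)=p$, let $c$ be the product of the $\lceil\nu(n)/2\rceil$ smallest prime factors of $n$ (counted with multiplicity when $\nu=\Omega$) and put $d:=n/c$; then $P^+(c)\le p$, $p\mid c$, $P^-(d)\ge p$ (and $P^-(d)>p$ when $\nu=\omega$), and $\nu(d)\le\nu(c)\le\nu(d)+1$, and the map $n\mapsto(c,d)$ is a bijection onto the set of such pairs, whence
\[
N_p(x)=\sum_{\substack{cd\le x,\ P^+(c)\le p,\ p\mid c\\ P^-(d)\ge p,\ \nu(d)\le\nu(c)\le\nu(d)+1}}1.
\]

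I would then render this analytic. Attach a variable $w$ to $\nu$ and extract the constraint via $\mathbf{1}[\nu(c)=\nu(d)+j]=\tfrac1{2\pi i}\oint w^{\nu(d)+j-\nu(c)}\tfrac{\d w}{w}$, $j\in\{0,1\}$, so that $N_p(x)$ is a contour integral over a small circle in $w$ of $\tfrac{1+w}{w}$ times the Perron integral of $\tfrac{x^s}{s}F_p(s;1/w)D_p(s;w)$, where $F_p(s;t):=\sum_{P^+(c)\le p,\ p\mid c}t^{\nu(c)}c^{-s}$ is a finite Euler product, analytic near $s=1$, and $D_p(s;w):=\sum_{P^-(d)>p}w^{\nu(d)}d^{-s}=\zeta(s)^w E_p(s,w)$ with $E_p$ analytic near $s=1$. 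Evaluating the $s$-integral by the Selberg--Delange method at the singularity $(s-1)^{-w}$ of $\zeta(s)^w$, one controls the partial products $\prod_{q\le p}(1-q^{-s})^{\pm w}$ occurring in $F_p$ and $E_p$ through Mertens' theorem; the remaining convergent corrections assemble the constant $\HH_\nu$ of \eqref{defHnu} — this is exactly why that Euler factor appears, and why $\nu=\Omega$ forces $\Re w<2$. One must also note that $F_p$ and $E_p$ each vary on the scale $s-1\asymp1/\log p$, producing factors $p^{\kappa_\nu(w)(s-1)}$ that combine with $x^s$; after integration in $s$ this yields, uniformly for $p$ in the relevant range, an asymptotic expansion of $N_p(x)$ as a $w$-integral whose integrand involves $(\log x)^{w-1}$, $(\log p)^{1/w-w}$, $\HH_\nu(1/w)$ and $1/\Gamma(w)$.

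Next, summing over $p$ with the help of $\sum_{p\le X}(\log p)/p=\log X+O(1)$ and writing $\theta:=\log_2 p/\log_2 x$, one finds that $S_\nu(x)$ equals $x\log_2 x$ times a double integral over $(\theta,w)$ whose integrand is $(\log x)^{\widetilde G(\theta,w)}$ times a slowly varying factor, with $\widetilde G(\theta,w)=(w-1)+\theta(1+1/w-w)$. The stationarity equation $\partial_\theta\widetilde G=1+1/w-w=0$ gives $w^2=w+1$, hence $w_\star=\varphi$: the golden ratio enters precisely as the root of $w^2=w+1$ in $(1,2)$. Then $\partial_w\widetilde G=0$ fixes $\theta_\star=\varphi^2/(\varphi^2+1)$, and $\widetilde G(\theta_\star,\varphi)=\varphi-1=1/\varphi$; equivalently, carrying out the $w$-integration first recovers the familiar profile $\widetilde G(\theta,w_\star(\theta))=\theta-1+2\sqrt{\theta(1-\theta)}$, maximised at $\theta_\star$. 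The two-dimensional saddle-point method (the Hessian of $\widetilde G$ is nondegenerate of index one there) then gives the main term $A_\nu x(\log x)^{1/\varphi}$: evaluating the slowly varying part at the saddle yields $\HH_\nu(1/w_\star)=\HH_\nu(\varphi-1)$, the factor $1/\Gamma(\varphi)$, and $\e^{-\gamma}$ (from $(\e^\gamma\log p)^{1/w-w}$ at $w=\varphi$, where $1/\varphi-\varphi=-1$), while the Hessian and the Gaussian normalisations produce $1/\sqrt5$ and cancel the factor $\log_2 x$.

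Finally, to obtain \eqref{estlogpm} for an arbitrary fixed $J$, I would run the whole argument with complete asymptotic expansions: Selberg--Delange to all orders, Mertens' formula with all of its secondary terms, and the saddle-point/Watson's-lemma expansion of the double integral; collecting the resulting powers of $1/\log_2 x$ (note $\log((\log x)^{\theta_\star})=\theta_\star\log_2 x$) defines the coefficients $\gc_{\nu,j}$. I expect the decisive difficulty to be \emph{uniformity}: the Selberg--Delange and sieve inputs must hold uniformly in $w$ throughout a fixed complex neighbourhood of $\varphi$, in $p$ across the whole saddle window (including the transitional range where the smooth factor $c$ ceases to be ``typical'' among $p$-friable integers, where the $p^{\kappa_\nu(w)(s-1)}$-correction is essential), and in the parameter $\nu(c)$; and one must show that the contributions of $p$ away from the saddle, of the higher powers $v_p(n)\ge2$ (an issue only when $\nu=\Omega$), and of the error committed in replacing $\sum_p$ by an integral, are each $O((\log_2 x)^{-J-1})$ relative to the main term.
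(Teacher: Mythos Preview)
Your outline is correct and closely parallels the paper's proof; the packaging differs more than the substance. The paper writes $n=apb$ with $p=\p(n)$ isolated (after discarding $p^2\mid n$ and $p>\sqrt x$ at cost $O(x)$), sums over the integer $k=\nu(b)$, splitting further on the parity of $\nu(n)$, and invokes ready-made asymptotic expansions for $\Phi_{\nu,k}(x/ap,p)=\#\{m\le x/ap:P^-(m)>p,\ \nu(m)=k\}$ (via Alladi) and for $\lambda_\nu(p,k)=\sum_{P^+(a)<p,\,\nu(a)=k}1/a$ (via Erd\H os--Tenenbaum) --- each of which is itself obtained by a Cauchy integral in a generating variable --- before performing a real-variable saddle in $k$ at $k\approx w_p:=\sqrt{(\log u_p)\log_2 p}$ and then a second saddle in $\beta_p:=\log_2 p/\log_2 x$ at $\beta_p=\varphi/\sqrt5$. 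Your single complex variable $w$, weighting the rough and smooth halves by $w^{\nu(d)}$ and $(1/w)^{\nu(c)}$, is exactly the dual picture: at the paper's double saddle the two effective generating parameters are $\alpha_p=\sqrt{(1-\beta_p)/\beta_p}$ and $1/\alpha_p$, and at the final critical point $\beta_p=\varphi/\sqrt5$ one finds $\alpha_p=\varphi-1=1/\varphi$, i.e.\ your $1/w_\star$ and $w_\star$. What the paper's route buys is modularity --- the uniformity you correctly flag as the crux comes pre-packaged in the cited theorems --- together with a clean a priori localisation (its lemma restricting to $\tfrac14\log_2 x<\nu(n)<2\log_2 x$ and $(\log x)^{3/5}<\log p\le(\log x)^{16/17}$ replaces your saddle-window estimates); what your route buys is conceptual economy (one contour variable, one Selberg--Delange step), at the price of re-deriving those uniform inputs from scratch. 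One point worth making explicit in your sketch: the Selberg--Delange secondary terms arising from the $s$-derivatives of $F_p$ are of size $(\log p/\log x)^j$, hence invisible at every scale $1/(\log_2 x)^m$, so the full expansion \eqref{estlogpm} comes entirely from the two saddle-point expansions --- just as it does in the paper's sequential treatment.
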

\noindent{\it Remarques.} (i) Le coefficient $\gc_{\nu,1}$ est explicité en \eqref{valeurbnu1} {\it infra} et satisfait à l'approximation suivante
\[\gc_{\nu,1}\approx\begin{cases}
	0{,}245436\quad&\textnormal{si }\nu=\omega,\\
	0{,}334322&\textnormal{si }\nu=\Omega.
\end{cases}
\]\smallskip
(ii) Compte tenu de ce qui précède, la formule \eqref{estlogpm} appliquée avec $J=1$ fournit la formule asymptotique optimale
\[S_\nu(x)=A_\nu x(\log x)^{1/\varphi}\bigg\{1+O\bigg(\frac{1}{\log_2 x}\bigg)\bigg\}\quad(x\geq3).\]



\section{Domaines de contribution principale} 

Notons d'emblée que la contribution à $S_\nu(x)$ des entiers $n$ divisibles par $\p(n)^2$ peut être majorée trivialement. En effet, nous avons
\begin{equation}\label{majo_facteur_carre}
	\sum_{\substack{n\leq x\\ \p(n)^2|n}}\log \p(n)\leq\sum_{p\leq x}\log p\sum_{m\leq x/p^2}1\leq x\sum_{p\leq x}\frac{\log p}{p^2}\ll x.
\end{equation}
Il en est de même de la contribution des entiers $n$ tels que $\p(n)>\sqrt x$. En effet, cette éventualité implique $n=\p(n)$, d'où
\begin{equation}\label{grands_p}
	\sum_{\substack{n\leq x\\\p(n)>\sqrt x}}\log\p(n)\leq\sum_{\sqrt x<p\leq x}\log p\ll x.
\end{equation}\par
Posons
\begin{equation}\label{defPhikxy}
\Phi_{\nu,k}(x,y):=\sum_{\substack{n\leq x\\P^-(n)>y\\\nu(n)=k}}1\quad(2\leq y\leq x,\, k\geq 1).
\end{equation}
Si $n\geq 2$ désigne un entier sans facteur carré, nous l'écrivons sous la forme $n=apb$ où $p=\p(n)$, $a$ est $p$-friable, $b$ est $p$-criblé, et $\nu(b)-\nu(a)\in\{0,1\}$. 
Il suit, à l'aide des estimations \eqref{majo_facteur_carre} et \eqref{grands_p}, 
	\begin{equation}\label{somme_complete}
		\begin{aligned}
			S_\nu(x)&=\sum_{p\leq \sqrt x}\log p\sum_{\substack{a\leq x/p\\P^+(a)< p}}\sum_{\substack{b\leq x/ap\\P^-(b)> p\\\nu(b)-\nu(a)\in\{0,1\}}}1+O(x)\\
			&=\sum_{p\leq \sqrt x}\log p\sum_{k\leq(\log x)/\log 4}\sum_{\substack{a\leq x/p\\P^+(a)< p\\\nu(a)\in\{k-1,k\}}}\Phi_{\nu,k}\Big(\frac x{ap},p\Big)+O(x).
		\end{aligned}
	\end{equation}
Les quatre énoncés qui suivent nous permettront d'obtenir des estimations de la contribution à \eqref{somme_complete} de différents sous-domaines en $p$ et $k$.


\begin{lemma}\label{borne_Omega}L'estimation
	\begin{equation}\label{majo_grand_Omega}
		\sum_{\substack{n\leq x\\\Omega(n)\geq k}}1\ll\frac{kx\log x}{2^k},
	\end{equation}
	a lieu uniformément pour $k\geq 1$, $x\geq 2$.
\end{lemma}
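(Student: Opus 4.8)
The plan is to bound the count of integers $n \le x$ with $\Omega(n) \ge k$ by a standard Rankin-type argument, dyadically decomposing the tail and using the generating Dirichlet series of $2^{\Omega(n)}$.

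First I would recall that $\sum_{n \le x} 2^{\Omega(n)} \ll x \log x$. This is a classical estimate: writing $d_2(n) = 2^{\Omega(n)}$ for the multiplicative function that takes value $2^k$ on prime powers $p^k$, one has $\sum_{n\le x} 2^{\Omega(n)} \le \prod_{p \le x}\bigl(1 + \sum_{k \ge 1} 2^k/p^k\bigr)^{\text{-ish}}$; more cleanly, $2^{\Omega(n)} = \sum_{d \mid n} g(d)$ for a suitable nonnegative multiplicative $g$ supported with $g(p) = 1$, so that $\sum_{n \le x} 2^{\Omega(n)} = \sum_{d \le x} g(d)\lfloor x/d\rfloor \le x \sum_{d\le x} g(d)/d \ll x \prod_{p \le x}(1 + 1/p + O(1/p^2)) \ll x \log x$. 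Alternatively one simply quotes Shiu's theorem or the Selberg–Delange method for the multiplicative function $2^{\Omega(n)}$, whose Dirichlet series is $\zeta(s)^2 H(s)$ with $H$ holomorphic and bounded for $\Re s > 1/2$.

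The main step is then the Rankin trick applied dyadically. For the full range, write
\[
\sum_{\substack{n \le x \\ \Omega(n) \ge k}} 1 \le 2^{-k}\sum_{n \le x} 2^{\Omega(n)} \ll \frac{x \log x}{2^k},
\]
which already gives the bound with an extra factor $\log x$ rather than $k$. To recover the sharper factor $k$ in place of $\log x$, I would instead split according to the size of the largest prime factor or, more simply, first handle $k \le 10\log_2 x$ and $k > 10 \log_2 x$ separately: for $k$ small relative to $\log x$ the cruder bound $x(\log x)/2^k$ is not yet good enough, so one refines by noting that if $\Omega(n) \ge k$ then either $n$ has a prime power divisor $p^j$ with $j$ large, or $n$ has at least $k$ distinct small-ish prime factors; counting the latter via $\sum_{\Omega(n) \ge k} 1 \le z^{-k}\sum_{n\le x} z^{\Omega(n)}$ for an optimized $z \in (1,2)$ and invoking $\sum_{n \le x} z^{\Omega(n)} \ll_z x(\log x)^{z-1}$ gives, after choosing $z = 1 + 1/\log_2 x$ or $z$ so that $z^{-k}(\log x)^{z-1}$ is minimized, a bound of the shape $x(\log x)/2^k$ with the logarithmic factor improved; careful bookkeeping of the optimization constant delivers the clean $kx\log x/2^k$ valid uniformly. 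In practice, since the target has a harmless factor $k$, it suffices to observe that $\sum_{n \le x}2^{\Omega(n)} \ll x\log x$ together with $2^{-k} \le k 2^{-k}$ already yields \eqref{majo_grand_Omega} for all $k \ge 1$ — so the genuine content is just the classical mean-value bound for $2^{\Omega(n)}$, and the factor $k$ is inserted purely for later convenience.

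The main obstacle, such as it is, is keeping the estimate \emph{uniform} in $k \ge 1$: for bounded $k$ the right-hand side must dominate a quantity of order $x\log x$ (indeed $\sum_{n \le x} 1 \asymp x$ and the bound gives $\gg x\log x$, so there is plenty of room), while for $k$ of order $\log_2 x$ and beyond the exponential saving $2^{-k}$ must be shown to beat the $x\log x$ factor — which it does once $2^k \gg \log x$, i.e. $k \gg \log_2 x$. Since no case-split threshold depends on $x$ in a way that breaks uniformity, and the multiplicative-function input $\sum_{n\le x}2^{\Omega(n)}\ll x\log x$ holds for all $x \ge 2$, the uniform statement follows immediately. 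I would therefore present the proof in two lines: cite or prove $\sum_{n \le x}2^{\Omega(n)} \ll x \log x$, then apply Rankin's inequality with base $2$.
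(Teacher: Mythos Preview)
There is a genuine gap: your central claim that $\sum_{n\le x}2^{\Omega(n)}\ll x\log x$ is false. The correct order of magnitude is $x(\log x)^2$. The Dirichlet series of $2^{\Omega(n)}$ is $\prod_p(1-2p^{-s})^{-1}$, and the Euler factor at $p=2$ is $(1-2^{1-s})^{-1}$, which has a simple pole at $s=1$; hence the ratio to $\zeta(s)^2$ is \emph{not} holomorphic near $s=1$, contrary to what you write. Your convolution argument fails for the same reason: writing $2^{\Omega(n)}=\sum_{d\mid n}g(d)$ forces $g(2^k)=2^{k-1}$, so $\sum_d g(d)/d$ diverges at the prime $2$. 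Consequently the ``two-line'' Rankin argument with $z=2$ only yields $\ll x(\log x)^2/2^k$, which is strictly weaker than the stated bound.

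The factor $k$ in the lemma is therefore not cosmetic but essential, and it arises precisely from optimising $z$ strictly below $2$. One needs the uniform estimate
\[
\sum_{n\le x}z^{\Omega(n)}\ll\frac{x(\log x)^{z-1}}{2-z}\qquad(1<z<2),
\]
where the dependence on $z$ through $1/(2-z)$ is the whole point. Choosing $z=2-1/(2k)$ then gives $1/(2-z)=2k$, while $(2-1/(2k))^k\asymp 2^k$ and $(\log x)^{z-1}\le\log x$, producing exactly $kx(\log x)/2^k$. You sketch this optimisation in passing (``an optimized $z\in(1,2)$'') but then abandon it for the incorrect shortcut; that optimisation is in fact the entire proof.
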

\begin{proof}
	Puisque, pour $k\geq 1$, $z\in]1,2[$, nous avons $\textbf{1}_{\{\Omega(n)\geq k\}}\leq z^{\Omega(n)-k}$, nous obtenons, en appliquant \cite[ex. 66]{tenenbaum_solutions} à la fonction sommatoire de $n\mapsto z^{\Omega(n)}$,
	\begin{align*}
		\sum_{\substack{n\leq x\\\Omega(n)\geq k}}1\leq \sum_{n\leq x}z^{\Omega(n)-k}\leq xz^{-k}\prod_{p\leq x}\frac{p-1}{p-z}\ll \frac{xz^{-k}}{2-z}(\log x)^{z-1}.
	\end{align*}
	Le choix $z=2-1/2k$ fournit alors la majoration
	\[
		\sum_{\substack{n\leq x\\\Omega(n)\geq k}}1\ll \frac{kx(\log x)^{1-1/2k}}{(2-1/2k)^k}\ll\frac{kx\log x}{2^k}.\qedhere
		\]
\end{proof}

Pour tout ensemble de nombre premiers non vide $E$, posons
\[\omega(n,E):=\sum_{p|n,\,p\in E}1\quad(n\geq 1),\quad\Omega(n,E):=\sum_{p^k\|n,\,p\in E}k\quad (n\geq 1),\quad E(x):=\sum_{p\leq x,\;p\in E}\frac{1}{p}\quad (x\geq 2).\]
Dans la suite, nous utiliserons la notation $\nu(n,E)$ pour faire simultanément référence à $\omega(n,E)$ ou $\Omega(n,E)$. Définissons enfin
\[Q(v):=v\log v-v+1\quad(v>0).\]


\begin{lemma}\label{hall_tenenbaum_resultat} Soit $p_0$ un nombre premier et $E$ un ensemble de nombres premiers non vide tel que $\min\{p:p\in E\}\geq p_0$. Pour tous $0<a<1<b<p_0$, nous avons
 	\begin{equation}\label{Omega_n_E}
 		\sum_{\substack{n\leq x\\\nu(n,E)\leq a E(x)}}1\ll_{a,p_0} x\e^{-E(x)Q(a)},\quad\sum_{\substack{n\leq x\\\nu(n,E)\geq b E(x)}}1\ll_{b,p_0} x\e^{-E(x)Q(b)}.
	 \end{equation}
\end{lemma}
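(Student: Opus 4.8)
The plan is to reduce both estimates to a single exponential-moment (Chernoff-type) bound on the additive function $n\mapsto\nu(n,E)$, restricted to integers $n\le x$, and then optimize the auxiliary parameter. First I would record the mean behaviour: by Mertens-type estimates for the subset $E$, one has $\sum_{n\le x}\nu(n,E)=E(x)\,x+O_{p_0}(x)$ (with $\Omega$ contributing an extra absolutely convergent term from prime powers, harmless since $\min E\ge p_0$), so $aE(x)$ and $bE(x)$ are genuinely below and above the mean and a large-deviation estimate is what is needed.

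The core step is the following: for $z>0$ the generating Dirichlet-type sum factorizes, and one has the uniform bound
\[
\sum_{n\le x} z^{\nu(n,E)}\ll_{z,p_0} x\exp\!\Big((z-1)E(x)\Big),
\]
valid for $z$ in a fixed compact subinterval of $(0,p_0)$ (for $\nu=\Omega$ one needs $z<p_0$ so that the Euler factor $\prod_{p\in E}(1-z/p)^{-1}$ converges; for $\nu=\omega$ any $z>0$ works). This is exactly the kind of estimate furnished by \cite[ex.\ 66]{tenenbaum_solutions} already invoked in the proof of Lemma~\ref{borne_Omega}, now applied to the truncated function $\nu(\cdot,E)$ whose associated ``mean'' is $E(x)$ rather than $\log_2 x$; the implied constant depends on $z$ and $p_0$ through the ratio of the full product over $p\le x$ to its $E$-part and through the finitely many primes below $p_0$.

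Granting this, for the first inequality I would take $0<z<1$ and use $\mathbf 1_{\{\nu(n,E)\le aE(x)\}}\le z^{\nu(n,E)-aE(x)}$, which gives
\[
\sum_{\substack{n\le x\\ \nu(n,E)\le aE(x)}}1\ \le\ z^{-aE(x)}\sum_{n\le x}z^{\nu(n,E)}\ \ll_{z,p_0}\ x\exp\!\Big(E(x)\big(z-1-a\log z\big)\Big).
\]
The exponent $z-1-a\log z$ is minimized at $z=a$ (legitimate since $0<a<1$), and the minimal value is $a-1-a\log a=-(a\log a-a+1)=-Q(a)$, yielding the stated bound $x\,\e^{-E(x)Q(a)}$ with an implied constant depending only on $a$ and $p_0$. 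For the second inequality I would instead take $1<z<p_0$ and use $\mathbf 1_{\{\nu(n,E)\ge bE(x)\}}\le z^{\nu(n,E)-bE(x)}$; the exponent becomes $E(x)(z-1-b\log z)$, minimized at $z=b$ (which lies in $(1,p_0)$ by hypothesis), with minimal value $-Q(b)$, giving $x\,\e^{-E(x)Q(b)}$.

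The main obstacle is making the moment bound genuinely uniform with the right main term $\exp((z-1)E(x))$ rather than the cruder $\exp((z-1)\log_2 x)$: one must show that replacing the full product $\prod_{p\le x}$ by $\prod_{p\in E,\,p\le x}$ costs only a $z$- and $p_0$-dependent constant, i.e.\ that $\prod_{p\le x,\,p\notin E}(1-1/p)^{-1}(1+(z-1)/p)$ (resp.\ the $\Omega$-analogue) stays between two positive constants. This follows because $\log$ of that product is $\sum_{p\le x,\,p\notin E}\big((z-1)/p+O(1/p^2)\big)$, and the linear part telescopes against $\exp((z-1)E(x))$ after writing $E(x)=\sum_{p\le x}1/p-\sum_{p\le x,\,p\notin E}1/p=\log_2 x+O(1)-\sum_{p\le x,\,p\notin E}1/p$; the constraint $\min E\ge p_0$ is what keeps the excluded finite primes $<p_0$ from producing an $x$-dependent discrepancy. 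Once this bookkeeping is done, the rest is the routine Chernoff optimization carried out above.
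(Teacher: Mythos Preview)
Your proposal is correct and follows essentially the same Rankin--Chernoff approach as the paper: bound the indicator by $z^{\nu(n,E)-cE(x)}$, establish the moment estimate $\sum_{n\le x}z^{\nu(n,E)}\ll_{z,p_0} x\,\e^{(z-1)E(x)}$, and optimize at $z=a$ (lower tail) or $z=b$ (upper tail). The paper differs only in presentation: for the lower tail it first reduces $\nu$ to $\omega$ via $\omega\le\nu$ and derives the moment bound from \cite[th.~00]{hall_tenenbaum} combined with $1/\log x\ll\e^{-E(x)}$, while for the upper tail it invokes \cite[ex.~66]{tenenbaum_solutions} directly, but the substance is the same as what you outline.
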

\begin{proof}
	Commençons par établir la première majoration \eqref{Omega_n_E}. La méthode de Rankin fournit, pour $0<a<1$,
	\begin{equation}\label{eq:majo:casomega:rankin}
		\sum_{\substack{n\leq x\\\nu(n,E)\leq a E(x)}}1\leq \sum_{\substack{n\leq x\\ \omega(n,E)\leq aE(x)}}1\leq a^{-aE(x)}\sum_{n\leq x}a^{\omega(n,E)}.
	\end{equation}
	Par ailleurs, une majoration standard de la moyenne d'une fonction multiplicative comme \cite[th.\thinspace00]{hall_tenenbaum} ou \cite[th.\thinspace III.3.5]{tenenbaum_livre} permet d'écrire
	\begin{equation}\label{eq:majoompowath00}
		\sum_{n\leq x}a^{\omega(n,E)}\ll\frac{x}{\log x}\sum_{n\leq x}\frac{a^{\omega(n,E)}}{n}\cdot
	\end{equation}
	La première inégalité découle alors de \eqref{eq:majo:casomega:rankin} et \eqref{eq:majoompowath00} en remarquant d'une part que
	\[\frac1{\log x}\ll\e^{-E(x)},\]
	et, d'autre part que,
	\[\sum_{n\leq x}\frac{a^{\omega(n,E)}}{n}\leq\prod_{\substack{p\leq x\\ p\in E}}\frac{p}{p-a}\ll_{a,p_0}\e^{aE(x)}.\]\par
	Par ailleurs, en appliquant \cite[ex. 66]{tenenbaum_solutions} à la fonction sommatoire de $n\mapsto b^{\Omega(n,E)}$ nous obtenons
	\[\sum_{\substack{n\leq x\\\nu(n,E)\geq b E(x)}}1\leq xb^{-bE(x)}\prod_{p\leq x}\Big(1-\frac1p\Big)\sum_{\nu\geq 0}\frac{b^{\omega(p^\nu,E)}}{p^\nu}= xb^{-bE(x)}\prod_{\substack{p\leq x\\ p\in E}}\frac{p-1}{p-b}\ll_{b,p_0}xe^{-E(x)Q(b)}.\]
	La deuxième majoration \eqref{Omega_n_E} s'ensuit.
\end{proof}



\begin{lemma}\label{domaines_p_k}
	Notons 
	\[\A_\nu:=\Big\{n\leqslant x:\tfrac14\log_2 x<\nu(n)< 2\log_2 x,\,(\log x)^{3/5}<\log\p(n)\leqslant (\log x)^{16/17}\Big\}.\] 
	Nous avons alors 
	\begin{equation}\label{est-Anu}
		\sum_{n\in[1,x]\smallsetminus\A_\nu}\log\p(n)\ll x(\log x)^{1/\varphi-1/250}.
	\end{equation}
\end{lemma}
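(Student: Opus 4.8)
The plan is to split $[1,x]\setminus\A_\nu$ into a handful of subdomains according to which of the defining conditions fails, and to bound the contribution of each separately, always keeping in mind the trivial bounds \eqref{majo_facteur_carre} and \eqref{grands_p} which allow us to restrict to squarefree $n$ with $\p(n)\le\sqrt x$. Writing $n=apb$ as in \eqref{somme_complete} with $p=\p(n)$, $a$ $p$-friable, $b$ $p$-criblé, and $\nu(b)-\nu(a)\in\{0,1\}$, the key structural fact is that $\nu(n)\asymp\nu(a)\asymp\nu(b)$ and that $p$ is simultaneously $P^+(a)$ and $P^-(b)$. I would treat the following four regions: (a) $\nu(n)\ge 2\log_2 x$; (b) $\nu(n)\le\tfrac14\log_2 x$; (c) $\log\p(n)\le(\log x)^{3/5}$; (d) $\log\p(n)>(\log x)^{16/17}$.

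For region (a) the bound is immediate from Lemma \ref{borne_Omega}: since $\nu(n)\le\Omega(n)$, the set $\{n\le x:\Omega(n)\ge k\}$ with $k=\lceil 2\log_2 x\rceil$ has cardinality $\ll kx\log x/2^k\ll x(\log x)^{1-2\log 2+o(1)}$, and multiplying by $\log\p(n)\le\log x$ still leaves a power saving well beyond $(\log x)^{1/\varphi-1/250}$, because $1+1-2\log 2<1/\varphi$. For region (b) I would use the first estimate of Lemma \ref{hall_tenenbaum_resultat} with $E$ the set of all primes, so that $E(x)=\log_2 x+O(1)$; taking $a=\tfrac14$ gives $\ll x\exp(-Q(\tfrac14)\log_2 x)=x(\log x)^{-Q(1/4)+o(1)}$, and since $Q(\tfrac14)=1-\tfrac54\log 4>1-1/\varphi$ (numerically $Q(1/4)\approx 0.6534$ while $1-1/\varphi\approx 0.382$), again after multiplying by $\log\p(n)\le\log x$ we are below $(\log x)^{1/\varphi-1/250}$ comfortably.

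The heart of the matter is regions (c) and (d), where $\nu(n)$ is of normal order but $\p(n)$ is abnormally small or abnormally large. Here the idea is that $\p(n)=q_{\lceil\omega(n)/2\rceil}(n)$ being small forces roughly half of the $\asymp\log_2 x$ prime factors of $n$ to lie below $\p(n)$, i.e. $\omega(n,E_p)\ge\tfrac12\omega(n)(1+o(1))$ where $E_p$ is the set of primes $\le p$; but $E_p(x)=\log_2 p+O(1)$, and when $\log p\le(\log x)^{3/5}$ we have $\log_2 p\le\tfrac35\log_2 x+O(1)$, so this is a large deviation of the number of small prime factors. I would feed this into the relevant inequality of Lemma \ref{hall_tenenbaum_resultat} (summing dyadically over the possible values of $p$, or rather of $\log_2 p$) to get a bound $x(\log x)^{-c}$ with $c>1/\varphi-1/250$; symmetrically, $\log p>(\log x)^{16/17}$ forces $\omega(n,E'_p)\ge\tfrac12\omega(n)(1+o(1))$ with $E'_p$ the primes in $(p,x]$, and $E'_p(x)=\log_2 x-\log_2 p+O(1)\le\tfrac1{17}\log_2 x+O(1)$, another large deviation handled by the same lemma. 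The main obstacle will be bookkeeping the deviation parameters $a,b$ in Lemma \ref{hall_tenenbaum_resultat} so that the resulting exponents genuinely beat $1/\varphi-1/250$ uniformly over the dyadic ranges of $p$, and handling the extremal sub-ranges of $p$ (very close to $(\log x)^{3/5}$ from above, or to $(\log x)^{16/17}$ from below) where the deviation is weakest — there one must check that the numerical constants $3/5$, $16/17$, $1/4$, $2$ in the definition of $\A_\nu$ have been chosen generously enough, which is presumably exactly why those particular fractions appear. Once each of the four contributions is shown to be $\ll x(\log x)^{1/\varphi-1/250}$, summing them yields \eqref{est-Anu}.
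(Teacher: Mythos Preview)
Your four-region decomposition and your treatment of (a), (b) and (d) follow the paper's proof closely. (One numerical slip: $Q(\tfrac14)=\tfrac34-\tfrac14\log 4\approx 0{.}4034$, not $0{.}6534$; the inequality $1-Q(\tfrac14)<1/\varphi-1/250$ that you actually need is still true.) For (d) the paper also invokes the second inequality of Lemma~\ref{hall_tenenbaum_resultat} with $E=\PP_1:=\PP\cap\big(\exp\{(\log x)^{16/17}\},x\big]$ and $b=2$, exactly along the lines you sketch.

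The genuine gap is region (c). You propose to treat ``$\p(n)$ small'' as a large upward deviation of $\omega(n,E_p)$ where $E_p$ is the set of primes $\le p$. But after discarding only regions (a) and (b) the strongest lower bound available is $\omega(n,E_p)\ge\lceil\nu(n)/2\rceil>\tfrac18\log_2 x$, while $E_p(x)\approx\log_2 p$ can be as large as $\tfrac35\log_2 x$. Near that boundary the ratio $\omega(n,E_p)/E_p(x)$ drops below $1$, so there is no upward deviation at all and Lemma~\ref{hall_tenenbaum_resultat} yields no saving; dyadic splitting does not help, because this weakest sub-range is precisely the one carrying the largest weight $\log p$. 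The paper bypasses the difficulty entirely: for (c) it uses nothing but the trivial observation $\log\p(n)\le(\log x)^{3/5}$ together with $|\{n\le x:p\mid n\}|\le x/p$, giving
\[
\sum_{\log p\le(\log x)^{3/5}}\frac{x\log p}{p}\ll x(\log x)^{3/5},
\]
which is already $\ll x(\log x)^{1/\varphi-1/250}$ since $\tfrac35<\tfrac1\varphi-\tfrac1{250}$. So region (c) needs no large-deviation input whatsoever; the smallness of the weight $\log p$ does all the work. Replace your plan for (c) by this one-line estimate and the rest of your outline matches the paper.
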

\begin{proof}
	Au vu de l'estimation \eqref{grands_p}, il suffit de considérer les entiers $n\leq x$ vérifiant $\p(n)\leq\sqrt{x}$.\par
	En appliquant \eqref{majo_grand_Omega} avec $k:=\lfloor2\log_2 x\rfloor$, il vient
	\begin{equation}\label{borne_sup_nu}
		\begin{aligned}
			\sum_{\substack{n\leq x\\ \nu(n)\geq 2\log_2 x}}\log \p(n)&\leq \sum_{p\leq x}\log p\sum_{\substack{d\leq x/p\\\Omega(d)\geq\lfloor2\log_2 x\rfloor-1}}1\ll \frac{x\log_2 x}{(\log x)^{\log 4-1}}\sum_{p\leq x}\frac{\log p}{p}\\
			&\ll x(\log x)^{2-\log 4}\log_2 x\ll x(\log x)^{1/\varphi-1/250}.
		\end{aligned}
	\end{equation}
	La contribution des entiers $n$ tels que $\nu(n)\geq 2\log_2 x$ peut donc être absorbée par le terme d'erreur de~\eqref{estlogpm}. Notons $W_0$ et $W_{-1}$ respectivement les deux branches de la fonction de Lambert, réciproque de la fonction $z\mapsto z\e^z$. Les deux solutions $\xi_0$ et $\xi_1$ de l'équation $1/\varphi+Q(\xi)-1=0$ sont alors
	\begin{align}\label{sol_delta}		\xi_0:=\exp\big\{1+W_{-1}\big(-1/\e\varphi\big)\big\}\approx0.26583,\quad\xi_1:=\exp\big\{1+W_0\big(-1/\e\varphi\big)\big\}\approx1.99374.
	\end{align}
	Cela permet d'estimer la contribution à \eqref{est-Anu} des entiers $n$ tels que $\nu(n)\leq \tfrac14\log_2 x$. En effet, notant~$\PP$ l'ensemble des nombres premiers, le théorème de Mertens implique, pour $x$ assez grand et $p\leq\sqrt{x}$, 
	\[\PP(x/p)\geqslant \PP(\sqrt{x})\geq\log_2 x-1.\]  
	En choisissant $a=\tfrac14$, la première majoration \eqref{Omega_n_E} fournit alors
	\begin{equation}\label{borne_inf_k}
		\sum_{\substack{n\leq x,\,p|n\\\nu(n,\PP)\leq(\log_2 x)/4}}1\leq\sum_{\substack{d\leq x/p\\\omega(d,\PP)\leq\{1+\PP(x/p)\}/4}}1\ll \frac{x}{p(\log x)^{Q(1/4)}}\quad(p\leq\sqrt{x}).
	\end{equation}
	Nous en déduisons que la contribution à \eqref{est-Anu} de tels entiers est
	\begin{align*}
		\sum_{p\leq \sqrt x}\log p\sum_{\substack{n\leq x,\,p|n\\\omega(n)\leq(\log_2 x)/4}}1\ll\frac{x}{(\log x)^{Q(1/4)}}\sum_{p\leq x}\frac{\log p}{p}\ll x(\log x)^{1-Q(1/4)},
	\end{align*}
	ce qui convient puisque $1-Q(1/4)<1/\varphi-1/250$.\par
	La contribution à \eqref{est-Anu} des entiers $n$ contrevenant à l'encadrement $\tfrac14\log_2 x<\nu(n)\leq2\log_2 x$ est donc acceptable. Si $n\in [1,\sqrt{x}]\sset\A_\nu$ vérifie cette condition, nous avons soit $\log\p(n)\leqslant(\log x)^{3/5}$, soit $(\log x)^{16/17}<\log\p(n)\leqslant \tfrac12\log x$. La contribution à \eqref{est-Anu} des entiers correspondant à la première éventualité est
	\begin{equation}\label{borne_inf_p}
		\ll\sum_{\log p\leqslant (\log x)^{3/5}}\frac{x\log p}p\ll x(\log x)^{3/5}.
	\end{equation}
	Si $n$ vérifie la seconde inégalité, alors $n$ possède au moins $\tfrac18\log_2 x$ facteurs premiers dans l'intervalle $]\exp\{(\log x)^{16/17}\},x]$. Posons $\PP_1:=\PP\cap]\exp\{(\log x)^{16/17}\},x]$. 
	Une nouvelle utilisation du théorème de Mertens montre que $\PP_1(x/p)\leq\PP_1(x)=\tfrac1{17}\log_2 x+o(1)<\tfrac{1}{16}\log_2 x$ pour $x$ assez grand et $p\leq \sqrt x$. La deuxième majoration \eqref{Omega_n_E} appliquée avec $b=2$ implique
	\begin{align*}
		\sum_{\substack{n\leq x,\,p|n\\\nu(n,\PP_1)\geq(\log_2 x)/8}}1\leq\sum_{\substack{n\leq x,\,p|n\\\Omega(n,\PP_1)\geq(\log_2 x)/8}}1\leq\sum_{\substack{d\leq x/p\\\Omega(d,\PP_1)\geq2\PP_1(x/p)}}1\ll \frac{x}{p(\log x)^{Q(2)}}.
	\end{align*}
	La contribution correspondante à \eqref{est-Anu} est donc
	\begin{equation}\label{borne_sup_p}
		\sum_{\exp\{(\log x)^{16/17}\}<p\leqslant x}\frac{x\log p}{p(\log x)^{Q(2)}}\ll x(\log x)^{1-Q(2)}.
	\end{equation}
	Puisque $\tfrac35<1/\varphi-\tfrac{4}{250}$ et $1-Q(2)<1/\varphi-\tfrac1{250}$, les majorations \eqref{borne_inf_p} et \eqref{borne_sup_p} impliquent bien~\eqref{est-Anu}.
\end{proof}

Dans la suite, nous nous intéresserons principalement à l'évaluation de la contribution $S_{\nu,\iota}(x)$ à $S_\nu(x)$ des entiers $n$ vérifiant $\nu(n)\equiv 1(\modulo 2)$. Dans ce cas, plus favorable pour les calculs à suivre, nous avons $\nu(a)=k$ dans \eqref{somme_complete}. Nous traiterons en parallèle le cas de la somme complémentaire $S_{\nu,\pi}(x)$ en indiquant au fil des énoncés les éventuelles modifications à prendre en compte. Posons
\begin{equation}\label{S_star}
	\begin{gathered}
		\J_x:=\big]\e^{(\log x)^{3/5}},\e^{(\log x)^{16/17}}\big],\quad \K_x:=\big[\lceil\tfrac18\log_2 x-1\rceil,\lfloor\log_2x\rfloor\big]\cap\R_+^*,\\
		S^*_{\nu,\iota}(x):=\sum_{p\in\J_x}\log p\sum_{k\in\K_x}\sum_{\substack{a\leq x/p\\P^+(a)<p\\\nu(a)=k}}\Phi_{\nu,k}\Big(\frac x{ap},p\Big)\quad (x\geq 3).
	\end{gathered}
\end{equation}

\begin{property} Nous avons l'estimation
	\begin{equation}\label{S_S_star}
		S_{\nu,\iota}(x)=S^*_{\nu,\iota}(x)+O\big(x(\log x)^{1/\varphi-1/250}\big).
	\end{equation}
\end{property}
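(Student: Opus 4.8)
The plan is to bound the difference $S_{\nu,\iota}(x)-S^*_{\nu,\iota}(x)$ by showing that all the terms we have removed when passing from the full sum to the restricted one are already accounted for by the error term in Lemma~\ref{domaines_p_k}. Recall that $S_{\nu,\iota}(x)$ is, by definition, the contribution to $S_\nu(x)$ of the integers $n$ with $\nu(n)$ odd, and that for such integers the decomposition $n=apb$ of \eqref{somme_complete} forces $\nu(a)=k=\nu(b)$, so that
\[
S_{\nu,\iota}(x)=\sum_{p\leq\sqrt x}\log p\sum_{k\geq 1}\sum_{\substack{a\leq x/p\\ P^+(a)<p\\ \nu(a)=k}}\Phi_{\nu,k}\Big(\frac x{ap},p\Big)+O(x),
\]
the $O(x)$ coming from \eqref{majo_facteur_carre} and \eqref{grands_p}. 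Comparing with the definition of $S^*_{\nu,\iota}(x)$ in \eqref{S_star}, the difference consists exactly of the terms where $p\notin\J_x$ (i.e. $\log p\leq(\log x)^{3/5}$ or $(\log x)^{16/17}<\log p\leq\tfrac12\log x$) or $k\notin\K_x$ (i.e. $k<\tfrac18\log_2 x-1$ or $k>\log_2 x$), with all remaining terms positive. Hence, up to the harmless $O(x)$,
\[
0\leq S_{\nu,\iota}(x)-S^*_{\nu,\iota}(x)\ll x+\sum_{\substack{n\leq x,\ \nu(n)\text{ odd}\\ n\notin\A_\nu,\ \p(n)\leq\sqrt x}}\log\p(n),
\]
because every pair $(p,k)$ that is dropped corresponds to an integer $n=apb$ with either $\log\p(n)\notin\big((\log x)^{3/5},(\log x)^{16/17}\big]$ or $\nu(n)=2k+\varepsilon$ (with $\varepsilon\in\{-1,0,1\}$ according to the multiplicity convention, here $\varepsilon$ is fixed by parity) lying outside $(\tfrac14\log_2 x,2\log_2 x)$; one checks that $k<\tfrac18\log_2 x-1$ forces $\nu(n)<\tfrac14\log_2 x$ and $k>\log_2 x$ forces $\nu(n)>2\log_2 x$ for $x$ large. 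This last sum is majorised by the full sum $\sum_{n\in[1,x]\smallsetminus\A_\nu}\log\p(n)$, which is $\ll x(\log x)^{1/\varphi-1/250}$ by Lemma~\ref{domaines_p_k}, and this dominates the $O(x)$ term since $1/\varphi-1/250>0$.

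The only slightly delicate point is the bookkeeping between the index $k=\nu(a)$ appearing in \eqref{somme_complete} and the quantity $\nu(n)$ constrained in the definition of $\A_\nu$: one must verify that the truncation ranges $\J_x$ and $\K_x$ in \eqref{S_star} have been chosen precisely so that the set of pairs $(p,k)$ they retain is contained in the set for which the corresponding $n$ lie in $\A_\nu$. For the $p$-range this is immediate from $\J_x=\big]\e^{(\log x)^{3/5}},\e^{(\log x)^{16/17}}\big]$ matching the condition $(\log x)^{3/5}<\log\p(n)\leq(\log x)^{16/17}$; for the $k$-range one uses $\nu(n)=2k$ when $\nu=\omega$ (resp.\ $\nu(n)\in\{2k-1,2k\}$ when $\nu=\Omega$, but here the parity of $\nu(n)$ is fixed odd so $\nu(n)=2k-1$ for $\nu=\Omega$), whence $k\in\K_x=\big[\lceil\tfrac18\log_2 x-1\rceil,\lfloor\log_2 x\rfloor\big]$ indeed implies $\tfrac14\log_2 x<\nu(n)\leq 2\log_2 x$ for $x$ sufficiently large, after a routine adjustment of the endpoints. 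Thus every retained term of $S_{\nu,\iota}(x)$ with $n\notin\A_\nu$ has in fact $n\in\A_\nu$, a contradiction, so the discarded terms are exactly those with $n\notin\A_\nu$ (plus those with $\p(n)>\sqrt x$, already in the $O(x)$).

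For the complementary sum $S_{\nu,\pi}(x)$ (the case $\nu(n)$ even) the argument is identical, the only change being that the relation between $k$ and $\nu(n)$ shifts by one unit: in the decomposition $n=apb$ one now has $\nu(b)=\nu(a)+1$, so $\nu(n)=2k+1$ for $\nu=\omega$ and $\nu(n)\in\{2k,2k+1\}$ for $\nu=\Omega$; the endpoints of $\K_x$ and $\J_x$ already allow this $O(1)$ slack, so Lemma~\ref{domaines_p_k} applies verbatim and yields the same bound $O\big(x(\log x)^{1/\varphi-1/250}\big)$. The main obstacle, such as it is, is purely one of careful verification that no retained $(p,k)$ pair escapes $\A_\nu$ — there is no analytic difficulty here, everything reduces to Lemma~\ref{domaines_p_k} once the inclusion of index ranges is checked. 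Summing the two contributions $S_{\nu,\iota}$ and $S_{\nu,\pi}$ gives \eqref{S_S_star} with the stated error term.
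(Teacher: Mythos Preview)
Your approach is essentially the paper's: reduce the discarded terms to the sum over $n\notin\A_\nu$ and invoke Lemma~\ref{domaines_p_k}. The first paragraph is correct and already contains the whole argument.

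However, your second paragraph is logically confused. You claim the ``slightly delicate point'' is to check that retained pairs $(p,k)$ give $n\in\A_\nu$, but that is the \emph{wrong direction}: what the bound in your displayed inequality requires is that every \emph{discarded} pair gives $n\notin\A_\nu$ (equivalently, $n\in\A_\nu\Rightarrow(p,k)$ retained). You did verify this correctly in the first paragraph (``$k<\tfrac18\log_2 x-1$ forces $\nu(n)<\tfrac14\log_2 x$'', etc.), so the second paragraph is redundant --- and its conclusion ``the discarded terms are \emph{exactly} those with $n\notin\A_\nu$'' overclaims, since only one inclusion is needed or established. Also, in the odd case $\nu(n)=2k+1$ for both $\nu=\omega$ and $\nu=\Omega$ (since $P^+(a)<p<P^-(b)$ forces $p\nmid ab$), not $\nu(n)=2k$ as you write; this slip is harmless for the endpoint check but should be corrected. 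Finally, the last paragraph on $S_{\nu,\pi}$ is extraneous: the statement concerns $S_{\nu,\iota}$ only.
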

\begin{proof}
	D'après \eqref{borne_sup_nu}, nous pouvons restreindre la somme intérieure de \eqref{somme_complete} aux entiers $a$ tels que $\tfrac18\log_2 x-1\leq\nu(a)=\tfrac12\{\nu(n)-1\}\leq \log_2 x$. Ainsi, d'après \eqref{somme_complete}, \eqref{borne_sup_nu} et \eqref{borne_inf_k} nous avons
	\begin{align*}
		S_{\nu,\iota}(x)&=\sum_{p\leq \sqrt x}\log p\sum_{k\in\K_x}\sum_{\substack{a\leq x/p\\P^+(a)<p\\\nu(a)=k}}\Phi_{\nu,k}\Big(\frac{x}{ap},p\Big)+O\Big(x(\log x)^{1/\varphi-1/250}\Big)\\
		&=S^*_{\nu,\iota}(x)+\sum_{p\notin\J_x}\log p\sum_{k\in\K_x}\sum_{\substack{a\leq x/p\\P^+(a)<p\\\nu(a)=k}}\Phi_{\nu,k}\Big(\frac{x}{ap},p\Big)+O\Big(x(\log x)^{1/\varphi-1/250}\Big)\\
		&= S^*_{\nu,\iota}(x)+O\bigg(\sum_{n\in[1,x]\smallsetminus\A_\nu}\log\p(n)+x(\log x)^{1/\varphi-1/250}\bigg).
	\end{align*}
	Une nouvelle application du Lemme \ref{domaines_p_k} fournit alors le résultat annoncé.
\end{proof}

\section{Réduction de la somme $S_{\nu,\iota}^*(x)$}
\subsection{Préparation}
Une estimation précise de la somme intérieure en $k$ de $S_{\nu,\iota}^*(x)$ définie en \eqref{S_star} nécessite d'évaluer $\Phi_{\nu, k}(X,Y)$ pour certaines valeurs relatives de $X$ et $Y$.\par
Introduisons les fonctions
\begin{gather*}
	\erf(z):=\frac{2}{\sqrt\pi}\int_0^z\e^{-s^2}\d s,\quad\erfc(z):=1-\erf(z)\quad(z\in\C),\\
	\gq(z):=\sqrt{\frac{z^2}{2(1+iz-\e^{iz})}}\quad(0<|z|<r_0),
\end{gather*}
où la racine carrée est prise en détermination principale et $r_0>0$ est un nombre réel assez petit pour que $\gq(z)$ admette un prolongement analytique sur $D(0,r_0)$. Posons également, pour tout $n\in\N^*$, et $s$ complexe de module assez petit,
\[\mu(s):=\sum_{n\geq 1}\bigg[\frac{\d^{n-1}\gq(z)^n}{\d z^{n-1}}\bigg]_{z=0}\frac{s^n}{n!}\cdot \]
D'après le théorème d'inversion de Lagrange, $\mu$ est solution de l'équation $s\gq(z)=z$ pour $|s|$ et $|z|$ suffisamment petits. Notons $\{d_n\}_{n\in\N}$ la suite des coefficients du développement asymptotique de $z\mapsto\Gamma(z+1)\e^z/z^{z}\sqrt{2\pi z}$ selon les puissances croissantes de $1/z$. Pour tous $c>0$, $m\in\N$, et toute fonction $\varphi$ holomorphe sur $D(0,c)$, posons
\begin{equation}\label{defCfrak}
	\gC_{m,\varphi}(v):=\sum_{0\leq j\leq m}\frac{d_{m-j}\Gamma(j+\frac12)2^j}{v^m\sqrt\pi}\sum_{0\leq n\leq 2j}\frac{\mu^{(2j-n+1)}(0)}{n!(2j-n)!}\bigg[\frac{\d^n \varphi(v\e^{i\mu(\tau)})}{\d \tau^{n}}\bigg]_{\tau=0}\quad(0<v<c).
\end{equation}
Définissons enfin, pour $0<b<c$,
\begin{equation}\label{defIkpsiphixy}
	I_{k,\varphi}(\xi):=\frac1{2\pi i}\oint_{|z|=r}\frac{\e^{\xi z}\varphi(z)}{z^{k+1}}\d z\quad(r< b,\,\xi>0).
\end{equation}
Nous aurons besoin du lemme technique suivant, fournissant une estimation de $I_{k,\varphi}$ dans un domaine restreint de valeurs du rapport $k/\xi$. 

\begin{lemma}\label{intgen}
	Soient $0<a<b<c$ trois réels fixés, $M$ un entier fixé et $\varphi$ une fonction holomorphe sur $D(0,c)$. Nous avons, uniformément pour $k\in\N^*$, $\xi>0$ et $a\leq r:=k/\xi \leq b$,
		\begin{equation}\label{evalIkpsigxy}
		I_{k,\varphi}(\xi)=\frac{\xi^k}{k!}\bigg\{\sum_{0\leq m\leq M}\frac{\gC_{m,\varphi}(r)}{\xi^m}+O\bigg(\frac{1}{\xi^{M+1}}\bigg)\bigg\}.
	\end{equation}
	En particulier,
	\[\gC_{0,\varphi}(v)=\varphi(v),\quad \gC_{1,\varphi}(v)=-\tfrac12v\varphi''(v)\quad(0<v<b).\]
\end{lemma}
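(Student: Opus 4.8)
The strategy is the classical saddle-point method applied to the contour integral in \eqref{defIkpsiphixy}. First I would write the integrand on the circle $|z|=r$ in polar form $z=re^{i\theta}$, so that
\[
I_{k,\varphi}(\xi)=\frac{1}{2\pi r^{k}}\int_{-\pi}^{\pi}e^{\xi r e^{i\theta}}\varphi(re^{i\theta})e^{-ik\theta}\d\theta,
\]
and since $r=k/\xi$ the "phase" is $\xi\{re^{i\theta}-ir\theta\}$, whose real part $\xi r(\cos\theta-1)$ is maximised at $\theta=0$. This is the saddle point: in a neighbourhood $|\theta|\le\theta_0$ the exponent behaves like $-\tfrac12\xi r\theta^{2}+O(\xi r\theta^{3})$, while on $\theta_0\le|\theta|\le\pi$ the real part of the phase is $\le\xi r(\cos\theta_0-1)$, which since $\xi r=k\ge1$ is enough to bound the tail by $O(e^{-c\xi})$ provided $\theta_0$ is chosen like a fixed small constant (here the hypotheses $a\le r\le b$ and the holomorphy of $\varphi$ on $D(0,c)$ with $b<c$ guarantee all constants are uniform). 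So the whole contribution comes from the central arc.

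On the central arc I would perform the substitution that linearises the saddle, namely introduce the new variable through $e^{i\theta}=e^{i\mu(\tau)}$ — this is exactly the point of the function $\gq$ and its Lagrange-inversion solution $\mu$: the relation $2(1+iz-e^{iz})=z^{2}/\gq(z)^{2}$ means that $re^{i\theta}-ir\theta-(-\tfrac12 r\tau^{2}/\gq(\cdot)^{2})$ collapses, turning the exponent into a clean Gaussian $-\tfrac12\xi r\tau^{2}$ up to the change-of-variables Jacobian, which is encoded in the derivatives $\mu^{(j)}(0)$. Expanding $\varphi(re^{i\mu(\tau)})$ and the Jacobian as Taylor series in $\tau$, each monomial $\tau^{2j}$ integrated against $e^{-\xi r\tau^{2}/2}$ over $\R$ (the tail being again exponentially small) produces $\Gamma(j+\tfrac12)2^{j}/(\xi r)^{j+1/2}$ up to the familiar factor $\sqrt\pi$; collecting powers of $1/\xi$ and inserting the Stirling-type expansion of $\Gamma(k+1)$ via the coefficients $\{d_n\}$ to rewrite $r^{-k}(\xi r)^{-1/2}\asymp \xi^{k}/k!$ times a series, one recovers precisely the coefficients $\gC_{m,\varphi}(r)$ as written in \eqref{defCfrak}. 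The error term $O(\xi^{-M-1})$ comes from truncating the Taylor expansions at order $2M$ and controlling the remainder integral, uniformly because $r$ stays in the compact range $[a,b]\subset(0,c)$.

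The main obstacle is bookkeeping: showing that the formal manipulation above yields exactly the stated closed form for $\gC_{m,\varphi}$, i.e. matching the combinatorial factors $d_{m-j}\Gamma(j+\tfrac12)2^{j}/(v^{m}\sqrt\pi)$ and the double sum over $n$ coming from the Fa\`a-di-Bruno expansion of $\theta\mapsto\varphi(ve^{i\mu(\tau)})$ and of the Jacobian $\mu'(\tau)$. I would organise this by expanding $\gq(z)^{n}$ via the Lagrange formula (which is how $\mu$ was defined) and carefully tracking which derivative $[\d^{n}\varphi(v e^{i\mu(\tau)})/\d\tau^{n}]_{\tau=0}$ pairs with which $\mu^{(2j-n+1)}(0)/(n!(2j-n)!)$. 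Finally, to get the explicit low-order values I would just compute: $\gq(0)=1$ (from the expansion $1+iz-e^{iz}=\tfrac12 z^{2}+O(z^{3})$) gives immediately $\gC_{0,\varphi}(v)=\varphi(v)$; and pushing one order further, using $d_{0}=1$, $d_{1}=\tfrac1{12}$, together with $\mu'(0)=1$, $\mu''(0)$ and $\gq'(0)$ obtained from the next Taylor coefficient of $1+iz-e^{iz}=\tfrac12 z^{2}+\tfrac i6 z^{3}+\cdots$, a short computation yields $\gC_{1,\varphi}(v)=-\tfrac12 v\varphi''(v)$ after the first-order terms in $\varphi'$ cancel against the Jacobian correction.
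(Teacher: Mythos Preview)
Your plan is correct and essentially identical to the paper's proof: parametrise on $|z|=r$, split off the tail, apply the Lagrange-inversion change of variable $\vartheta=\mu(s)$ to turn the exponent into $-\tfrac12 ks^{2}$, Taylor-expand $\varphi(re^{i\mu(s)})\mu'(s)$, integrate the Gaussian moments, and convert via the Stirling coefficients $d_n$. The one point you gloss over is that $\gq$ is not real-valued on $\R\setminus\{0\}$, so the image of the real interval $[-\delta,\delta]$ under $\vartheta\mapsto\vartheta/\gq(\vartheta)$ has genuinely complex endpoints $\pm\sigma+i\tau$, and a short contour-deformation argument is needed before the real Gaussian moments can be read off; the paper carries this out explicitly (with the shrinking cutoff $\delta=\xi^{-1/3}$ rather than a fixed constant, which simplifies the deformation estimates).
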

\begin{proof} Nous pouvons supposer $\xi$ arbitrairement grand. D'après \eqref{defIkpsiphixy}, nous avons
	\[I_{k,\varphi}(\xi)=\frac{\e^{k}}{2\pi r^{k}}\int_{-\pi}^\pi\e^{k\{\e^{i\vartheta}-1-i\vartheta\}}\varphi(r\e^{i\vartheta})\d\vartheta\quad(\xi>0).\]
	Posons $\delta=\delta(\xi):=\xi^{-1/3}$ et notons respectivement $I^+(\delta)$ et $I^-(\delta)$ les contributions à $I_{k,\varphi}(\xi)$ des domaines $|\vartheta|≤\delta$ et $\delta<|\vartheta|\leqslant \pi$.  \par
	Intéressons-nous tout d'abord à $I^+(\delta)$. Posons $\delta_1:=\delta/\gq(\delta)=\sigma+i\tau\ (\sigma>0)$ et notons que $-\delta/\gq(-\delta)=-\overline{\delta_1}$. Nous déduisons alors de la relation
	\begin{equation}\label{eq:taylorgqdelta0}
		\gq(\delta)=1-\tfrac16i\delta+O(\delta^3)\quad (\delta\to 0),
	\end{equation}
	que $|\delta_1|\asymp\delta$. D'après le théorème d'inversion de Lagrange, la fonction $\mu(s)$ est solution de l'équation $s\gq(\mu(s))=\mu(s)$ pour $|s|$ assez petit. D'où
	\begin{equation}\label{eq:ega;mus;s}\e^{i\mu(s)}-1-i\mu(s)=-\tfrac12s^2.\end{equation}
	En particulier, l'égalité \eqref{eq:ega;mus;s} est valable pour $|s|\leq|\delta_1|$ compte tenu de notre hypothèse sur $\xi$. Le changement de variables $\vartheta=\mu(s)$ permet alors d'écrire
	\begin{equation}\label{estIk1}
		\begin{aligned}
			I^+(\delta)&=\frac{\xi^k}{\sqrt{2\pi}}\bigg(\frac{\e^k}{\sqrt{2\pi}k^k}\bigg)\int_{-\sigma+i\tau}^{\sigma+i\tau}\e^{k\{\e^{i\mu(s)}-1-i\mu(s)\}}\varphi(r\e^{i\mu(s)})\mu'(s)\d s\\
			&=\frac{\xi^k\sqrt k}{\sqrt{2\pi}k!}\bigg\{\sum_{0\leq n\leq M}\frac{d_n}{k^n}+O\bigg(\frac1{k^{M+1}}\bigg)\bigg\}\int_{-\sigma+i\tau}^{\sigma+i\tau}\e^{-ks^2/2}\varphi(r\e^{i\mu(s)})\mu'(s)\d s.
		\end{aligned}
	\end{equation}
Notons $\{a_j(r)\}_{j\in\N^*}$ la suite des coefficients de Taylor à l'origine de $\varphi(r\e^{i\mu(s)})\mu'(s)$, $\eta_0$ le segment reliant $-\sigma+i\tau$ à $\sigma+i\tau$, et posons, pour tout chemin $\eta$ de $\C$,
\[J_\ell(\eta):=\int_{\eta}s^\ell\e^{-ks^2/2}\d s.\]
L'intégrale de \eqref{estIk1} peut être récrite sous la forme
\[\sum_{0\leq\ell\leq 2M+1}a_\ell(r)J_\ell(\eta_0)+O_{r,M}\bigg(\int_{\eta_0}|s|^{2M+2}\e^{-k\Re(s^2)/2}\d s\bigg).\]
Par ailleurs, pour tout entier $\ell$ tel que $0\leq\ell\leq 2M+1$, nous pouvons déformer le contour d'intégration de sorte que
\[J_\ell(\eta_0)=\int_{-\infty}^{\infty}s^\ell\e^{-ks^2/2}\d s+O(|J_\ell(\eta_1)|+|J_\ell(\eta_2)|+|J_\ell(\eta_3)|+|J_\ell(\eta_4)|),\]
où
\begin{itemize}
	\item[\tiny$\bullet$] $\eta_1$ désigne le chemin joignant $-\infty$ à $-\sigma$;
	\item[\tiny$\bullet$] $\eta_2$ désigne le chemin joignant $-\sigma$ à $-\sigma+i\tau$;
	\item[\tiny$\bullet$] $\eta_3$ désigne le chemin joignant $\sigma$ à $\sigma+i\tau$;
	\item[\tiny$\bullet$] $\eta_4$ désigne le chemin joignant $\sigma$ à $\infty$.
\end{itemize}\medskip
Or, d'après \eqref{eq:taylorgqdelta0}, nous avons d'une part $\sigma\asymp\delta$, et, d'autre part, $\tau\asymp\delta^2$. Nous en déduisons les majorations
\[J_\ell(\eta_m) \ll\frac{\e^{-k^{1/3}/2}}{k^{(\ell+2)/3}}\ll\e^{-k^{1/3}/3}\quad (1\leq m\leq 4).\]
Ainsi, l'intégrale de \eqref{estIk1} vaut
	\begin{equation}\label{estintIk1}\sum_{0\leq j\leq M}\frac{a_{2j}(r)\Gamma(j+\frac12)2^{j+1/2}}{k^{j+1/2}}+O_{r,M}\bigg(\frac{1}{k^{M+3/2}}\bigg).\end{equation}
Les estimations \eqref{estIk1} et \eqref{estintIk1} permettent d'obtenir, en remarquant que $k\asymp\xi$, 
	\begin{equation}\label{estfinIk1}
		\begin{aligned}
			I^+(\delta)&=\frac{\xi^{k}}{k!}\bigg\{\sum_{0\leq m\leq M}\sum_{n+j=m}\frac{d_na_{2j}(r)\Gamma(j+\frac12)2^j}{\sqrt{\pi}r^{m}\xi^m}+O_{r,M}\bigg(\frac1{\xi^{M+1}}\bigg)\bigg\}\\
			&=\frac{\xi^{k}}{k!}\bigg\{\sum_{0\leq m\leq M}\frac{\gC_{m,\varphi}(r)}{\xi^m}+O_{r,M}\bigg(\frac{1}{\xi^{M+1}}\bigg)\bigg\}.
		\end{aligned}
	\end{equation}
	Il reste à évaluer $I^-(\delta)$. Notons $T(\delta):=[-\pi,\pi]\smallsetminus[-\delta,\delta]$. Puisque $\varphi$ est holomorphe pour $|z|<c$, il existe une constante $B$ telle que $|\varphi(z)|\leq B\ (|z|<c)$. Ainsi,
	\begin{equation}\label{estfinIk2}
		\begin{aligned}
			E_k(\delta)&:=\Big|\int_{T(\delta)}\e^{k\{\e^{i\vartheta}-1-i\vartheta\}}\varphi(r\e^{i\vartheta})\d\vartheta\Big|\leq B\int_{T(\delta)}\e^{k\{\cos\vartheta-1\}}\d\vartheta\\
			&\leq 2B\int_{\delta}^{\pi}\e^{-2k\vartheta^2/\pi^2}\d\vartheta\leq \frac{\pi^{3/2}}{\sqrt{2k}}\erfc\Big(\frac{\delta\sqrt{2k}}{\pi}\Big)\ll_B\frac1{\xi^{M+3/2}}.
		\end{aligned}
	\end{equation}
	Le résultat annoncé découle des estimations \eqref{estfinIk1} et \eqref{estfinIk2} en remarquant que
	\[I^-(\delta)\ll \frac{\xi^{k}\sqrt{k}E_k(\delta)}{k!}.\qedhere\]
\end{proof}

Le Lemme \ref{intgen} sera utile à l'estimation de deux quantités cruciales pour l'évaluation de $S_{\nu,\iota}^*(x)$. L'obtention de ces estimations fait l'objet des deux sous-sections suivantes.

\subsection{Développement asymptotique de $\Phi_{\nu,k}(x, y)$}

La quantité $\Phi_{\nu,k}(x, y)$ a été définie en \eqref{defPhikxy}. Dans la suite, posons
\begin{equation}\label{def_rapports}
	u=u_y:=\frac{\log x}{\log y}\quad (2\leq y\leq x),\quad h_{0}(z):=\frac{\e^{-\gamma z}}{\Gamma(1+z)}\quad (z\in\C).
\end{equation} 

Rappelons les définitions des fonctions $\gC_{m,\varphi}$ en \eqref{defCfrak} et posons, pour tout $m\in\N$,
\begin{equation}\label{defbetaPf}
	f_m(v):=\gC_{m,h_0}(v)\quad(v\in\R),\quad r_{t,y}:=\frac{t-1}{\log u_y}\quad(t\in\R,\,3\leq y < x).
\end{equation}
Nous proposons une généralisation d'un résultat d'Alladi \cite{alladi} dans un domaine restreint de valeurs de $k$. 

\begin{theorem}\label{corollaire_alladi}
	Soient $M\in\N$ et $0<a<b$ deux réels fixés. Sous les conditions 
	\[x\geq 3,\quad a\leq r_{k,y}\leq b,\quad\e^{(\log_2 x)^3}≤y\leqslant\sqrt x,\] 
	nous avons uniformément
	\begin{equation}\label{dvptphi}
		\Phi_{\nu,k}(x,y)=\frac{x(\log u)^{k-1}}{(k-1)!\log x}\bigg\{\sum_{0\leq m\leq M}\frac{f_m(r_{k,y})}{(\log u)^m}+O\bigg(\frac{1}{(\log u)^{M+1}}\bigg)\bigg\}\cdot
	\end{equation}
	En particulier,
	\[f_0(v)=h_0(v),\quad f_1(v)=-\tfrac12vh_0''(v)\quad(v\in\R).\]
\end{theorem}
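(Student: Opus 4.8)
The asymptotic count $\Phi_{\nu,k}(x,y)$ of $y$-sifted integers $n\le x$ with exactly $k$ prime factors (with or without multiplicity) is governed by a contour integral of the Dirichlet-series generating function, evaluated around a saddle point. The plan is to reduce \eqref{dvptphi} to an application of Lemma~\ref{intgen} with $\varphi=h_0$, so that the combinatorial coefficients $f_m=\gC_{m,h_0}$ fall out automatically. First I would write down, via a Buchstab-type iteration or directly via Perron's formula, the generating identity
\[
\sum_{\substack{n\ge 1\\ P^-(n)>y}}\frac{z^{\nu(n)}}{n^s}=\zeta(s)^{?}\cdot\big(\text{Euler product over }p\le y\big)^{-1}\cdots,
\]
more precisely the standard factorization $F_{\nu}(s,z):=\sum_{P^-(n)>y} z^{\nu(n)}n^{-s}=G_\nu(s,z)\prod_{p\le y}(1-p^{-s})^{?}$, where $G_\nu$ is holomorphic and bounded in a neighborhood of $s=1$ for $z$ near the relevant range, and extract $\Phi_{\nu,k}(x,y)$ as the coefficient of $z^k$ together with a summation $\sum_{n\le x}$. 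This is exactly the circle of ideas in Alladi's work \cite{alladi}; the novelty here is keeping track of the dependence on $k$ uniformly for $a\le r_{k,y}\le b$ and producing a full asymptotic expansion rather than a single main term.

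**Main steps.** (1) Extract the coefficient of $z^k$: writing $\Phi_{\nu,k}(x,y)=\frac{1}{2\pi i}\oint_{|z|=\rho}\Psi_y(x,z)\,\frac{\d z}{z^{k+1}}$ where $\Psi_y(x,z):=\sum_{n\le x,\,P^-(n)>y}z^{\nu(n)}$ is the usual sifting-with-weight sum. (2) Estimate $\Psi_y(x,z)$: by a Mellin/Selberg–Delange type argument (or by quoting the relevant uniform version of Alladi's estimate), one gets $\Psi_y(x,z)=\frac{x}{\log x}(\log u)^{z}\,\frac{G_\nu(1,z)}{\Gamma(z)}\{1+\text{lower order}\}$ uniformly for $z$ in a fixed disc, the key point being that $\prod_{p\le y}(1-1/p)^z\sim (e^{-\gamma}\log y)^{z}$ by Mertens, and $(\log x/\log y)^{z}=(\log u)^z$. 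Crucially $G_\nu(1,z)/\Gamma(z)\cdot e^{\gamma z}\cdot(\text{something})$ collapses to the function $h_0(z)=e^{-\gamma z}/\Gamma(1+z)$ after accounting for the exact Euler-factor corrections — for $\nu=\omega$ one uses $(1-1/p)^{-1}(1+z/(p-1))$ type factors, for $\nu=\Omega$ the $(1-z/p)^{-1}$ factors, which is precisely why the same $h_0$ works for both with only $G_\nu$ differing, and at $s=1$ both give a convergent product whose leading behavior is absorbed. (3) Insert this into the contour integral and recognize it, after the substitution $s\leftrightarrow z$ and matching $\xi=\log u$, $r=r_{k,y}=(k-1)/\log u$ (shifted by one because of the $z^{k+1}$ versus $z^{k}$ and the factor $(\log u)^z$), as exactly $I_{k-1,h_0}$ or $I_{k,h_0}$ in the notation of \eqref{defIkpsiphixy}. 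Then Lemma~\ref{intgen} delivers \eqref{dvptphi} with $f_m=\gC_{m,h_0}$, and the stated formulas $f_0=h_0$, $f_1(v)=-\tfrac12 v h_0''(v)$ are read off from the ``in particular'' clause of that lemma. (4) Handle error terms: the tail of the Perron integral, the contribution of the zero-free region of $\zeta$, and the passage from $\Psi_y(x,z)$ to its main term must all be shown to be $O(x(\log x)^{-1}(\log u)^{k-1}(k-1)!^{-1}(\log u)^{-M-1})$ uniformly; the hypothesis $y\ge e^{(\log_2 x)^3}$ (so $u$ is not too large, $\log u$ comparable to $\log_2 x$ up to lower order) and $y\le\sqrt x$ are used here.

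**Main obstacle.** The delicate part is the \emph{uniformity in $k$} of the intermediate estimate for $\Psi_y(x,z)$, and more specifically showing that the secondary terms in the Selberg–Delange expansion of $\Psi_y(x,z)$ — not just the leading $(\log u)^z$ — combine to give a clean holomorphic factor that can be fed through Lemma~\ref{intgen}. One must verify that $\Psi_y(x,z)=\frac{x(\log u)^z}{\log x}\{g_0(z)+g_1(z)/\log u+\cdots\}$ with each $g_m$ holomorphic on a disc slightly larger than $|z|=b$, uniformly in $y$ in the stated range, so that term-by-term application of the saddle-point Lemma~\ref{intgen} is legitimate and the resulting double sum over $m$ (from the Lemma) and the expansion of $\Psi_y$ can be reorganized into the single sum $\sum_{0\le m\le M}f_m(r_{k,y})(\log u)^{-m}$. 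The bookkeeping to check that this reorganization reproduces exactly $\gC_{m,h_0}$ — rather than some other combination — is where I expect to spend the most effort; it amounts to checking that the ``$h_0$'' appearing as $\varphi$ in \eqref{defIkpsiphixy} is precisely $e^{-\gamma z}/\Gamma(1+z)$ after all Mertens-type corrections and Euler-factor discrepancies between the $\nu=\omega$ and $\nu=\Omega$ cases are accounted for, and that the lower-order analytic corrections $g_m$, $m\ge 1$, contribute only to higher-order terms and not to $f_0$ or $f_1$.
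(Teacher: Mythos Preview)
Your approach is essentially the paper's: extract the $z^k$-coefficient of $\Phi_\nu(x,y,z)$ by Cauchy's formula, insert an estimate for the generating function, recognise the resulting contour integral as $I_{k-1,h_0}(\log u)$, and invoke Lemma~\ref{intgen}. The obstacle you anticipate is not real: Alladi's theorems (cited directly in the paper) already yield $\Phi_\nu(x,y,z)=\frac{x}{\log x}\,h_0(z)\,u^{z}\bigl\{1+O(u^{-1/2})\bigr\}$ uniformly for $|z|\le b$, with $h_0$ appearing cleanly and no series of auxiliary $g_m$'s; since $u^{-1/2}$ is negligible against every power of $(\log u)^{-1}$ in the stated range of~$y$, no double-sum reorganisation is needed and Lemma~\ref{intgen} applies immediately.
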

\begin{proof}
	Posons
	\begin{equation}\label{defphinuxyz}
		\Phi_{\nu}(x,y,z):=\sum_{\substack{n\leq x\\P^-(n)>y}}z^{\nu(n)}=\sum_{k\geqslant 0}\Phi_{\nu,k}(x,y)z^k\quad(3\leq y\leq x,\,z\in\C).
	\end{equation}
	Par la formule intégrale de Cauchy, nous avons
	\[\Phi_{\nu,k}(x,y)=\frac1{2\pi i}\oint_{|z|=r}\Phi_{\nu}(x,y,z)\frac{\d z}{z^{k+1}}\cdot\] 
	Notons que $h_0$, définie en \eqref{def_rapports}, est une fonction entière, elle est donc bornée sur le disque $|z|\leq b$. Une application de \cite[th. 3 \& th.4]{alladi}\footnote{Le théorème 3 de \cite{alladi} est énoncé sous la condition supplémentaire $b<2$ dans le cas $\nu=\Omega$, cependant, l'hypothèse de minoration sur $y$ permet d'étendre le résultat à la seule condition que $b$ soit fixé, comme indiqué dans les remarques figurant au paragraphe 8 de \cite{alladi}.}	
\begin{equation}\label{phinuksy;th32}
	\begin{aligned}
		\Phi_{\nu,k}(x,y)&=\frac{x}{\log x}\Big\{\frac{1}{2\pi i}\int_{|z|=r_{k,y}}h_0(z)\e^{z\log u}z^{-k}\d z+O\Big(\frac{(\log u)^{k-1}}{(k-1)!\sqrt{u}}\Big)\Big\}\\
		&=\frac{x}{\log x}\Big\{I_{k-1,h_0}(\log u)+O\Big(\frac{(\log u)^{k-1}}{(k-1)!\sqrt{u}}\Big)\Big\}.
	\end{aligned}
	\end{equation}
	Le résultat annoncé est alors une conséquence immédiate de \eqref{evalIkpsigxy}.
\end{proof}

Remarquons que les conditions de sommation portant sur les variables $p$ et $k$ dans la somme \eqref{S_star} entrent dans le domaine d'application du Théorème \ref{corollaire_alladi} puisque nous pouvons faire l'hypothèse que $p\leq \sqrt{x/ap}$. En effet, d'après \eqref{borne_sup_nu}, nous pouvons supposer $\Omega(n)\leq 2\log_2 x$ et donc $\nu(a)\leq 2\log_2 x$. 
 Puisque $p\leq\exp\{(\log x)^{16/17}\}$, il suit, pour $x$ suffisamment grand,
\[p\sqrt{ap}\leq ap^2\leq p^{2\log_2 x+2}\leq\exp\{(2\log_2 x+2)(\log x)^{16/17}\}\leq \sqrt x.\]


\subsection{Estimation d'une moyenne logarithmique}
Posons
\begin{equation}
	\label{deflambdas}\lambda_\nu(p,k):=\sum_{\substack{P^+(a)<p\\\nu(a)=k}}\frac{1}{a}\quad(3\leq p\leq \sqrt{x},\,k\in{\N}).
\end{equation}
Le résultat suivant précise, dans un domaine restreint de valeurs de $k$, une estimation de $\lambda_\nu(p,k)$ due à Erd\H os et Tenenbaum \cite{tenenbaum_erdos}.  Posons $r_\Omega:=2$, et $r_\omega:=+\infty$. Rappelons les définitions des fonctions $\HH_\nu$ en \eqref{defHnu} et $\gC_{m,\varphi}$ en \eqref{defCfrak} et définissons
\begin{equation}\label{defFnu}
	F_\nu(z):=\e^{\gamma z}\HH_\nu(z)\quad (|z|< r_\nu).
\end{equation}
Posons enfin
\begin{equation}\label{defgm}
	g_{\nu,m}(v):=\gC_{m,F_\nu}(v)\quad(v<r_\nu),\quad\gr_{t,p}:=\frac{t}{\log_2 p}\quad(t\in\R,\,p\geq 3).
\end{equation}
\begin{theorem}\label{est_lambda}
	Soient $M$ un entier fixé et $0<a<b<r_\nu$ deux réels fixés. Sous les conditions $p\geq 3$, $a\leq \gr_{k,p}\leq b$, nous avons uniformément
	\begin{equation}\label{estim_lambda_nu}
		\lambda_\nu(p,k)=\frac{(\log_2 p)^{k}}{k!}\bigg\{\sum_{0\leq m\leq M}\frac{g_{\nu,m}(\gr_{k,p})}{(\log_2 p)^m}+O\bigg(\frac{1}{(\log_2 p)^{M+1}}\bigg)\bigg\}.
	\end{equation}
\end{theorem}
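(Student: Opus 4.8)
The strategy mirrors the proof of Theorem \ref{corollaire_alladi}: we want to recognise $\lambda_\nu(p,k)$ as the $k$-th Taylor coefficient of an explicit Dirichlet-type generating function, extract that coefficient by a Cauchy integral, and then apply the technical Lemma \ref{intgen}. First I would introduce the generating series
\[
  L_\nu(p,z):=\sum_{P^+(a)<p}\frac{z^{\nu(a)}}{a}=\prod_{q<p}\Bigl(1-\frac1q\Bigr)^{-1\text{ or }\cdots}
\]
more precisely $L_\omega(p,z)=\prod_{q<p}\bigl(1+\tfrac{z}{q-1}\bigr)$ and $L_\Omega(p,z)=\prod_{q<p}\bigl(1-\tfrac zq\bigr)^{-1}$, convergent for $|z|<r_\nu$ in the $\Omega$ case and all $z$ in the $\omega$ case, so that $\lambda_\nu(p,k)$ is the coefficient of $z^k$. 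By Cauchy's formula,
\[
  \lambda_\nu(p,k)=\frac1{2\pi i}\oint_{|z|=\rho}\frac{L_\nu(p,z)}{z^{k+1}}\d z\qquad(\rho<r_\nu).
\]

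The key analytic input is a uniform factorisation $L_\nu(p,z)=F_\nu(z)\,(\log p)^{z}\,\bigl\{1+O(1/\log p)\bigr\}$, or better, an asymptotic expansion of $L_\nu(p,z)/(\log p)^z$ in powers of $1/\log p$ with leading term $F_\nu(z)=\e^{\gamma z}\HH_\nu(z)$; this is exactly the Erd\H os--Tenenbaum estimate cited from \cite{tenenbaum_erdos}, combined with Mertens' theorem $\prod_{q<p}(1-1/q)^{-1}=\e^\gamma\log p\{1+O(1/\log p)\}$ and its refinements. Writing $(\log p)^z=\e^{z\log_2 p}$, the Cauchy integral becomes, up to the tolerated error,
\[
  \lambda_\nu(p,k)=\frac1{2\pi i}\oint_{|z|=\rho}\frac{F_\nu(z)\,\e^{z\log_2 p}}{z^{k+1}}\d z
  \;=\;I_{k,F_\nu}(\log_2 p)
\]
in the notation of \eqref{defIkpsiphixy}, with the role of $\xi$ played by $\log_2 p$ and the ratio $r=k/\xi=\gr_{k,p}$ confined to $[a,b]\subset(0,r_\nu)$. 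Lemma \ref{intgen} then delivers precisely \eqref{estim_lambda_nu} with $g_{\nu,m}=\gC_{m,F_\nu}$, since $I_{k,F_\nu}(\log_2 p)=\tfrac{(\log_2 p)^k}{k!}\{\sum_{m\le M}\gC_{m,F_\nu}(\gr_{k,p})(\log_2 p)^{-m}+O((\log_2 p)^{-M-1})\}$.

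The main obstacle is making the factorisation of $L_\nu(p,z)$ sufficiently uniform in $z$ on the circle $|z|=\rho$ and precise enough (full asymptotic expansion, not just a one-term estimate) to feed Lemma \ref{intgen}: one must control $L_\nu(p,z)(\log p)^{-z}=\prod_{q<p}(1-1/q)^z(1+\tfrac z{q-1})$ (resp.\ $\prod(1-1/q)^z(1-z/q)^{-1}$) as a partial product converging to $\HH_\nu(z)\e^{\gamma z}\cdot$(error), with the tail $\sum_{q\ge p}$ giving the $O(1/\log p)$ correction and its iterates giving the lower-order terms; this requires either a careful summation-by-parts against the prime-counting error term or an appeal to the explicit statement in \cite{tenenbaum_erdos} together with its uniformity in the remarks there. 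Once this expansion is in hand with holomorphy of $F_\nu$ on $D(0,r_\nu)$, the endgame is a direct citation of Lemma \ref{intgen}, exactly as in the proof of Theorem \ref{corollaire_alladi}. A minor point to check is that the restriction $a\le\gr_{k,p}\le b$ keeps the saddle point of the integrand inside the region where both the factorisation and Lemma \ref{intgen} are valid, which it does by construction since $b<r_\nu$.
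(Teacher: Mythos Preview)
Your proposal is correct and follows essentially the same route as the paper: generating series, Cauchy's formula, factorisation $L_\nu(p,z)=F_\nu(z)(\log p)^z\{1+O(1/\log p)\}$ via a strong form of the prime number theorem, then a direct appeal to Lemma~\ref{intgen} with $\xi=\log_2 p$. One simplification: you worry about needing a full asymptotic expansion of $L_\nu(p,z)(\log p)^{-z}$ in powers of $1/\log p$, but the paper (and you) only need the one-term estimate, since $1/\log p$ is already $O\bigl((\log_2 p)^{-M-1}\bigr)$ for every fixed $M$ and is therefore absorbed by the final error term.
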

\begin{proof}
	Le membre de gauche de \eqref{estim_lambda_nu} est le coefficient de $z^k$ dans la série
	\[\sum_{P^+(n)< p}\frac{z^{\nu(n)}}{n}=\begin{cases}
		\displaystyle\prod_{q<p} \Big(1+\frac{z}{q-1}\Big)&\textnormal{si }\nu=\omega,\\
		\displaystyle\prod_{q<p} \Big(1-\frac{z}{q}\Big)^{-1}&\textnormal{si }\nu=\Omega
	\end{cases}\quad(|z|< r_\nu).\]
	Une forme forte du théorème des nombres premiers implique
	\[\sum_{P^+(n)< p}\frac{z^{\nu(n)}}{n}=F_\nu(z)(\log p)^z\Big\{1+O\Big(\frac{1}{\log p}\Big)\Big\}.\]
	Cela permet d'évaluer $\lambda_\nu(p,k)$ à l'aide de la formule de Cauchy, soit
	\begin{equation}\label{evallambdacauchy}
		\begin{aligned}
			\lambda_\nu(p,k)&=\frac{1+O(1/\log p)}{2\pi i}\oint_{|z|=r}\frac{F_\nu(z)\e^{z\log_2 p}\d z}{z^{k+1}}\\
			&=I_{k,F_\nu}(\log_2 p)\Big\{1+O\Big(\frac1{\log p}\Big)\Big\}.
		\end{aligned}
	\end{equation}
	Ici encore, le résultat souhaité est une conséquence directe de l'estimation \eqref{evalIkpsigxy}.
\end{proof}


\subsection{Réduction de la somme intérieure de $S_{\nu,\iota}^*(x)$}
Rappelons les définitions de $\J_x$ et $S_{\nu,\iota}^*(x)$ en \eqref{S_star}, celle de  $u_p$ en \eqref{def_rapports} et remarquons que nous avons l'encadrement
\[(\log x)^{1/17}\leq u_p\leq (\log x)^{2/5}\quad(x\geq 3,\,p\in\J_x).\] 
Posons également
\[v_{a,p}:=\frac{\log (x/ap)}{\log p}\quad (p\in\J_x,\, a\leq x/p).\]

\begin{lemma}\label{estimations} Sous les conditions
	\begin{gather*}
		x\geq 3,\quad p\in\J_x,\quad1\leq k\leq\log_2x,\quad a\leq x/p,\quad P^+(a)<p,\quad\Omega(a)\leq 2\log_2 x,
	\end{gather*}
	nous avons,
	\begin{equation}\label{estimations_diverses}
		\frac{(\log v_{a,p})^{k-1}}{(k-1)!\log (x/ap)}=\frac{(\log u_p)^{k-1}}{(k-1)!\log x}\bigg\{1+O\bigg(\frac{\log_2 x}{(\log x)^{1/17}}\bigg)\bigg\}.
	\end{equation}
\end{lemma}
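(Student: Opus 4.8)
The plan is to compare the two sides by analysing the ratio, splitting the claimed error into a contribution coming from the replacement of $\log(x/ap)$ by $\log x$ in the denominator and prefactor, and a contribution coming from the replacement of $\log v_{a,p}$ by $\log u_p$ inside the $(k-1)$-st power. First I would record the elementary bounds on the parameters: since $p\in\J_x$ we have $(\log x)^{3/5}<\log p\leq(\log x)^{16/17}$, hence $(\log x)^{1/17}\leq u_p\leq(\log x)^{2/5}$ as already noted, and in particular $\log u_p\asymp\log_2 x$. Moreover $P^+(a)<p$ together with $\Omega(a)\leq 2\log_2 x$ gives $a\leq p^{2\log_2 x}\leq\exp\{2(\log x)^{16/17}\log_2 x\}$, so that $\log(ap)\ll(\log x)^{16/17}\log_2 x$ and therefore
\[
\frac{\log(x/ap)}{\log x}=1+O\!\left(\frac{(\log x)^{16/17}\log_2 x}{\log x}\right)=1+O\!\left(\frac{\log_2 x}{(\log x)^{1/17}}\right).
\]
This already handles the factor $\log x/\log(x/ap)$ in front.

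Next I would treat the power $(\log v_{a,p})^{k-1}/(\log u_p)^{k-1}$. Writing $v_{a,p}=u_p\cdot\frac{\log x}{\log(x/ap)}^{-1}$, more precisely $\log v_{a,p}=\log u_p+\log\frac{\log(x/ap)}{\log x}$, and using the bound above we get $\log v_{a,p}=\log u_p+O\!\big(\log_2 x/(\log x)^{1/17}\big)$; since $\log u_p\asymp\log_2 x$, this is $\log v_{a,p}=\log u_p\{1+O(\varepsilon_x)\}$ with $\varepsilon_x:=1/(\log x)^{1/17}$ (the extra $\log_2 x$ being absorbed after dividing by $\log u_p\gg\log_2 x$). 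Raising to the power $k-1\leq\log_2 x$ and using $(1+O(\varepsilon_x))^{k-1}=1+O((k-1)\varepsilon_x)=1+O(\log_2 x/(\log x)^{1/17})$ — valid because $(\log_2 x)\varepsilon_x\to 0$ — yields the claimed relative error for this factor as well.

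Finally I would combine the two estimates multiplicatively: each factor is $1+O(\log_2 x/(\log x)^{1/17})$, and the product of two such quantities is again $1+O(\log_2 x/(\log x)^{1/17})$, which is exactly \eqref{estimations_diverses}. The only mildly delicate point is bookkeeping the $\log_2 x$ powers so that they do not accumulate beyond a single factor: one must be a little careful that $\log u_p$ is genuinely of order $\log_2 x$ (not smaller) so that dividing the additive error $O(\log_2 x/(\log x)^{1/17})$ inside the logarithm by $\log u_p$ leaves a relative error of size $O(1/(\log x)^{1/17})$, and that multiplying this by $k-1\leq\log_2 x$ restores exactly one power of $\log_2 x$. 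I expect this to be the main (and really the only) obstacle — everything else is the routine propagation of a small multiplicative error through a product and a bounded power.
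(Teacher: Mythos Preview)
Your plan is correct and follows essentially the same route as the paper's own proof: bound $\log(ap)\ll(\log_2 x)\log p$ from $P^+(a)<p$ and $\Omega(a)\le 2\log_2 x$, deduce that both $\log(x/ap)/\log x$ and $\log v_{a,p}/\log u_p$ are $1+O\!\big((\log_2 x)/u_p\big)$ (using $\log u_p\asymp\log_2 x$ for the latter), and then raise to the power $k-1\le\log_2 x$ before invoking $u_p\ge(\log x)^{1/17}$. The only cosmetic difference is that the paper carries the error as $O(\log_2 x/u_p)$ throughout and specialises at the end, whereas you substitute the bound $\log p\le(\log x)^{16/17}$ from the outset; the arguments are otherwise identical.
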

\begin{proof}
	Puisque $P^+(a)<p$ et $\Omega(a)\leq 2\log_2x$, nous avons $ap\leq p^{2\log_2 x+1}$. Par suite, $\log(ap)\ll(\log_2 x)\log p$ , et nous pouvons écrire
	\begin{equation}\label{estimation_1}
		\log\Big(\frac{x}{ap}\Big)=\Big(1-\frac{\log ap}{\log x}\Big)\log x=\Big\{1+O\Big(\frac{\log_2 x}{u_p}\Big)\Big\}\log x.
	\end{equation}
	Par ailleurs,
	\begin{equation*}
		v_{a,p}=\frac{\log(x/ap)}{\log p}=u_p\Big\{1+O\Big(\frac{\log_2 x}{u_p}\Big)\Big\},
	\end{equation*}
	donc
	\begin{equation}\label{estimation_2}
		(\log v_{a,p})^{k-1}=(\log u_p)^{k-1}\Big\{1+O\Big(\frac{(\log_2 x)\log p}{(\log u_p)\log x}\Big)\Big\}^{k-1}=(\log u_p)^{k-1}\Big\{1+O\Big(\frac{\log_2 x}{u_p}\Big)\Big\},
	\end{equation}
	puisque $k\ll \log_2 x$.\par
	En regroupant les estimations \eqref{estimation_1} et \eqref{estimation_2}, nous obtenons le résultat annoncé en notant que  $1/u_p\ll1/(\log x)^{1/17}$ dès que $p\in\J_x$.
\end{proof}

Rappelons les définitions de $f_m$ et $r_{t,p}$ en \eqref{defbetaPf}, de $g_{\nu,m}$ et $\gr_{t,p}$ en \eqref{defgm} et posons, pour $m\in\N$, $3\leq p\leq x$,
\begin{equation}\label{deffraks}
	\beta_p:=\frac{\log_2 p}{\log_2 x},\quad\varepsilon_x:=\frac{1}{\log_2 x},\quad \gS_{\nu,m}(p,t):=\sum_{0\leq j\leq m}\frac{f_j(r_{t,p})g_{\nu,m-j}(\gr_{t,p})}{(1-\beta_p)^j\beta_p^{m-j}}\quad(t\geq 1).
\end{equation}
Définissons enfin, pour tout $M\in\N$,
\begin{gather}
		\label{defsnu}s_\nu(p,t):=\frac{\{(\log u_p)\log_2 p\}^{t}}{t\Gamma(t)^2}\quad(3\leq p\leq x,\,t\geq 1),\\
		\label{S_2star}S^{**}_{\nu,\iota}(x ;M):=\frac{x}{\log x}\sum_{p\in\J_x}\frac{\log p}{p\log u_p}\sum_{k\in\K_x}s_\nu(p,k)\sum_{0\leq m\leq M}\gS_{\nu,m}(p,k)\varepsilon_x^m\quad(x\geq 3).
\end{gather}


\begin{property} Pour tout entier $M$ fixé, nous avons l'estimation
	\begin{equation}\label{est_s_2star}
		S^*_{\nu,\iota}(x)=S^{**}_{\nu,\iota}(x;M)\big\{1+O\big(\varepsilon_x^{M+1}\big)\big\}\quad(x\geq 3).
	\end{equation}
\end{property}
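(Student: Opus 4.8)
The plan is to insert, in the definition \eqref{S_star} of $S^*_{\nu,\iota}(x)$, the asymptotic expansions furnished by Theorems~\ref{corollaire_alladi} and~\ref{est_lambda}, and then to reorganise the resulting double expansion according to the powers of $\varepsilon_x=1/\log_2 x$; we may assume $x$ large. A preliminary reduction first restricts the inner sum to those $a$ with $\Omega(a)\leq 2\log_2 x$. When $\nu=\Omega$ this is automatic since $\Omega(a)=k\leq\log_2 x$; when $\nu=\omega$ the integers $n=apb\leq x$ picked up by the complementary part satisfy $\Omega(n)>2\log_2 x$, so by Lemma~\ref{borne_Omega} --- the computation being the one already carried out in \eqref{borne_sup_nu} --- their total contribution to $S^*_{\omega,\iota}(x)$ is $O\big(x(\log x)^{1/\varphi-1/250}\big)$, which is absorbed in the claimed relative error because $(\log x)^{-1/250}=o(\varepsilon_x^{M+1})$. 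On the restricted domain one has $\tfrac35<\beta_p\leq\tfrac{16}{17}$, so that $\log u_p=(1-\beta_p)\log_2 x$, $\log_2 p=\beta_p\log_2 x$ and $k$ are simultaneously of exact order $\log_2 x$, the ratios $r_{k,p}=(k-1)/\log u_p$ and $\gr_{k,p}=k/\log_2 p$ stay in fixed compact intervals (with $\gr_{k,p}<\tfrac53<2=r_\Omega$ when $\nu=\Omega$), and, by the remark following Theorem~\ref{corollaire_alladi}, $p\leq\sqrt{x/ap}$ and $\log(ap)=o(\log x)$. Hence Theorems~\ref{corollaire_alladi} and~\ref{est_lambda} apply with fixed constants; this is precisely what the choices of $\J_x$ and $\K_x$ are designed to guarantee.

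Next I would apply Theorem~\ref{corollaire_alladi} to $\Phi_{\nu,k}(x/ap,p)$ (with $x$ replaced by $x/ap$ and $y=p$, so that $v_{a,p}$ plays the role of $u$), and then Lemma~\ref{estimations}, together with $v_{a,p}=u_p\{1+O(\log_2 x/u_p)\}$ and the smoothness of the functions $f_m=\gC_{m,h_0}$ on a fixed compact, to transfer the whole $a$-dependence from $v_{a,p}$ to $u_p$: the factor $(\log v_{a,p})^{k-1}/\{(k-1)!\log(x/ap)\}$ becomes $(\log u_p)^{k-1}/\{(k-1)!\log x\}$, while $f_m\big((k-1)/\log v_{a,p}\big)$ becomes $f_m(r_{k,p})$ and $(\log v_{a,p})^{-m}$ becomes $(\log u_p)^{-m}$, each substitution costing a relative error $O\big((\log_2 x)(\log x)^{-1/17}\big)=o(\varepsilon_x^{M+1})$. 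The summand then depends on $a$ only through $x/ap=(x/p)\,a^{-1}$; summing over $a$ produces $\sum_{a\leq x/p}a^{-1}$, which I would replace by the complete sum $\lambda_\nu(p,k)$ of \eqref{deflambdas} --- an identity when $\nu=\Omega$ (then $a<p^k\leq x/p$), and when $\nu=\omega$ at the cost of an error $O\big((\log x)^{-\log 2}\lambda_\omega(p,k)\big)$ supplied by Rankin's method, again negligible before $\varepsilon_x^{M+1}$. Applying Theorem~\ref{est_lambda} to $\lambda_\nu(p,k)$ and using the identity
\[
\frac{(\log u_p)^{k-1}}{(k-1)!}\cdot\frac{(\log_2 p)^{k}}{k!}=\frac{s_\nu(p,k)}{\log u_p},
\]
which follows from $k\,\Gamma(k)^2=k!\,(k-1)!$, the $(p,k)$-term of $S^*_{\nu,\iota}(x)$ becomes
\[
\frac{x}{\log x}\cdot\frac{\log p}{p\log u_p}\,s_\nu(p,k)\Big\{\sum_{0\leq i\leq M}\frac{f_i(r_{k,p})}{(\log u_p)^i}+O\big(\varepsilon_x^{M+1}\big)\Big\}\Big\{\sum_{0\leq j\leq M}\frac{g_{\nu,j}(\gr_{k,p})}{(\log_2 p)^j}+O\big(\varepsilon_x^{M+1}\big)\Big\},
\]
uniformly for $p\in\J_x$, $k\in\K_x$, the two remainders being $\asymp\varepsilon_x^{M+1}$ because $1-\beta_p$ and $\beta_p$ are bounded away from $0$.

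Finally I would substitute $1/\log u_p=\varepsilon_x/(1-\beta_p)$ and $1/\log_2 p=\varepsilon_x/\beta_p$, expand the product of the two finite sums, and collect by the total power of $\varepsilon_x$: for $0\leq m\leq M$ the coefficient of $\varepsilon_x^m$ is exactly $\gS_{\nu,m}(p,k)$ in the sense of \eqref{deffraks}, while the cross-terms of total degree $>M$ and the two $O(\varepsilon_x^{M+1})$ remainders all contribute $O(\varepsilon_x^{M+1})$, since $f_i$, $g_{\nu,j}$, $1/(1-\beta_p)$ and $1/\beta_p$ are bounded here and $\gS_{\nu,0}(p,k)=h_0(r_{k,p})F_\nu(\gr_{k,p})$ is bounded away from $0$. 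Thus the $(p,k)$-term of $S^*_{\nu,\iota}(x)$ equals the corresponding term of $S^{**}_{\nu,\iota}(x;M)$ multiplied by $1+O(\varepsilon_x^{M+1})$, uniformly; since every term of $S^{**}_{\nu,\iota}(x;M)$ is then positive, summation over $p\in\J_x$ and $k\in\K_x$ yields \eqref{est_s_2star}. The main obstacle is the uniformity: one must keep careful track that $\log u_p$, $\log_2 p$ and $k$ are simultaneously of order $\log_2 x$ throughout the domain of summation --- which is exactly why $\J_x$ and $\K_x$ were chosen as they were --- so that the remainders $(\log u_p)^{-M-1}$ and $(\log_2 p)^{-M-1}$ of Theorems~\ref{corollaire_alladi} and~\ref{est_lambda} are genuinely $\ll\varepsilon_x^{M+1}$ and the preliminary truncation of $a$, together with the replacement of $\sum_{a\leq x/p}$ by $\lambda_\nu(p,k)$, introduces only negligible errors; the algebra of the double expansion is then routine.
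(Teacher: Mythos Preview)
Your proposal is correct and follows essentially the same route as the paper: apply Theorem~\ref{corollaire_alladi} to $\Phi_{\nu,k}(x/ap,p)$, use Lemma~\ref{estimations} to pass from $v_{a,p}$ to $u_p$, sum over $a$ to recover $\lambda_\nu(p,k)$, apply Theorem~\ref{est_lambda}, and regroup the resulting double expansion into $\sum_{m\le M}\gS_{\nu,m}(p,k)\varepsilon_x^m$. You are in fact slightly more explicit than the paper on two points it glosses over --- the passage from the truncated $a$-sum to the complete $\lambda_\nu(p,k)$ when $\nu=\omega$, and the positivity of the $(p,k)$-terms needed to turn the uniform termwise relative error into a relative error on the whole sum.
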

\begin{proof}
	Rappelons la définition de $r_{k,p}$ en \eqref{defbetaPf} et notons que
	\begin{equation}\label{defxikpa}
		r_{k,p,a}:=\frac{k-1}{\log v_{a,p}}=\frac{(k-1)(1+O\{(\log_2 x)/(u_p\log u_p)\})}{\log u_p}=r_{k,p}\Big\{1+O\Big(\frac{1}{u_p}\Big)\Big\}.
	\end{equation}
	Remarquons également que, pour $p\in\J_x$, $k\in\K_x$, nous avons $\tfrac{5}{16}\leq r_{k,p}\leq 17$ et $\tfrac{17}{128}\leq \gr_{k,p}\leq \tfrac53$. L'estimation \eqref{dvptphi} appliquée avec $a=\tfrac5{16}$ et $b=17$ fournit, pour $x\geq 3$, $p\in\J_x$, $a\leq x/p^2$,
	\begin{equation}\label{e1}
		\Phi_{\nu,k}\Big(\frac{x}{ap},p\Big)=\frac{x(\log v_{a,p})^{k-1}}{ap(k-1)!\log (x/ap)}\Big\{\sum_{0\leq m\leq M} \frac{f_m(r_{k,p,a})}{(\log v_{a,p})^m}+O\Big(\frac{1}{(\log v_{a,p})^{M+1}}\Big)\Big\}.
	\end{equation}
	De plus, une application de \eqref{estim_lambda_nu} avec $a=\tfrac{17}{128}$ et $b=\tfrac53$ fournit
	\begin{equation}\label{e2}
		\lambda_{\nu}(p,k)=\frac{(\log_2 p)^k}{k!}\Big\{\sum_{0\leq m\leq M}\frac{g_{\nu,m}(\gr_{k,p})}{\beta_p^m}\varepsilon_x^m+O(\varepsilon_x^{M+1})\Big\}\quad(p\in\J_x,\, k\in\K_x).
	\end{equation}
	Rappelons alors que, d'après \eqref{estimation_2}, nous avons, pour $0\leq m\leq M+1$,
	\begin{align}\label{estlog2xap}
		(\log v_{a,p})^m=(\log u_p)^m\Big\{1+O\Big(\frac{\log_2 x}{u_p}\Big)\Big\}.
	\end{align}
	À l'aide de \eqref{defxikpa} et \eqref{estlog2xap} et en insérant \eqref{e1} dans \eqref{S_star}, nous obtenons
	\[S^*_{\nu,\iota}(x)=\sum_{p\in\J_x}\log p\sum_{k\in\K_x}\sum_{\substack{a\leq x/p\\P^+(a)<p\\\nu(a)=k}}\frac{x(\log v_{a,p})^{k-1}}{ap(k-1)!\log (x/ap)}\Big\{\sum_{0\leq m\leq M} \frac{f_m(r_{k,p})}{(1-\beta_p)^m}\varepsilon_x^m+O(\varepsilon_x^{M+1})\Big\}.\]
	L'estimation \eqref{estimations_diverses} implique alors
	\begin{equation}\label{e3}
		S_{\nu,\iota}^*(x)=\frac{x}{\log x}\sum_{p\in\J_x}\frac{\log p}{p}\sum_{k\in\K_x}\frac{(\log u_p)^{k-1}\lambda_\nu(p,k)}{(k-1)!}\Big\{\sum_{0\leq m\leq M} \frac{f_m(r_{k,p})}{(1-\beta_p)^m}\varepsilon_x^m+O(\varepsilon_x^{M+1})\Big\}.
	\end{equation}
	En insérant \eqref{e2} dans \eqref{e3}, l'estimation requise \eqref{est_s_2star} s'ensuit.
\end{proof}


Concernant la somme complémentaire $S_{\nu,\pi}(x)$, portant sur les entiers $n\leq x$ vérifiant $\nu(n)\equiv 0\,(\modulo 2)$, rappelons que $\p(n)$ désigne alors le facteur premier d'indice $\nu(n)/2$. Ainsi, $\nu(a)=\nu(b)-1=k-1$ et en posant,
\begin{gather*}
	S^*_{\nu,\pi}(x):=\sum_{p\in\J_x}\log p\sum_{k\in\K_x}\sum_{\substack{a\leq x/p\\P^+(a)<p\\\nu(a)=k-1}}\Phi_{\nu,k}\Big(\frac x{ap},p\Big),\quad s_\nu^+(p,t):=\gr_{t,p}s_\nu(p,t)	\quad(3\leq p\leq x,\, t\geq 1),\\
	S^{**}_{\nu,\pi}(x;M):=\frac{x}{\log x}\sum_{p\in\J_x}\frac{\log p}{p\log u_p}\sum_{k\in\K_x}s_\nu^+(p,k)\sum_{0\leq m\leq M}\gS_{\nu,m}(p,k)\varepsilon_x^m\quad (x\geq 3),
\end{gather*}
l'estimation \eqref{est_s_2star} reste valable sous la forme
\[S_{\nu,\pi}^*(x)=S_{\nu,\pi}^{**}(x;M)\{1+O(\varepsilon_x^{M+1})\}.\]\par
Dans la suite, les quantités introduites seront définies implicitement pour le cas $\nu(n)$ impair. Lorsque des différences notables apparaissent pour le cas $\nu(n)$ pair, les quantités associées seront marquées du symbole $^+$.

\section{Préparation technique}

Dans toute cette section, fixons $M\in\N$. Rappelons la définition de $\J_x$ en \eqref{S_star} ainsi que les définitions de $u_y$ en \eqref{def_rapports} et de $f_m$ et $r_{t,y}$ en \eqref{defbetaPf}. Posons
\begin{equation}\label{defwp}
	w_p:=\sqrt{(\log u_p)\log_2 p}=\sqrt{\beta_p(1-\beta_p)}\log_2 x\quad(x\geq 3,\,p\in\J_x),
\end{equation}
de sorte que, d'après la définition \eqref{defsnu}, nous avons
\begin{equation}\label{defsstar}
	s_\nu(p,t):=\frac{w_p^{2t}}{t\Gamma(t)^2}\quad(3\leq p\leq x,\, t\geq 1).
\end{equation}
Afin d'alléger les notations, posons également, pour $p\in\J_x$,
\begin{equation}\label{def_beta_w}
	\alpha_p:=\frac{w_p}{\log_2 p}=\sqrt{\frac{1-\beta_p}{\beta_p}}\cdot
\end{equation}
Remarquons d'emblée que, pour $p\in\J_x$, nous avons $\tfrac35\leq\beta_p\leq\frac{16}{17}$, et $\tfrac14\leq\alpha_p\leq\sqrt{\tfrac{2}{3}}$. En~particulier, $w_p\asymp \log_2 x\asymp \log_2 p$. Nous utiliserons implicitement ces estimations dans la suite.
\par Rappelons que la fonction polygamma d'ordre $m\in\N$, notée $\psi^{(m)}$, est définie par
\[\psi^{(m)}(z):=\frac{\d^{m+1}\log \Gamma(z)}{\d z^{m+1}}\quad(z\in\C\smallsetminus\{-1,-2\ldots\}).\]
Nous notons $\psi:=\psi^{(0)}$ la fonction digamma.\par
 Posons enfin, pour $x\geq 3$, $p\in\J_x$,
\begin{equation}
\label{def_intervalles}
	\begin{gathered}
		\K_{p,x}:=\big[\lceil\tfrac18\log_2 x-1\rceil-w_p,\lfloor\log_2 x\rfloor-w_p\big],\\
		\K_{p,x,1}:=\bigg[-\sqrt{6(M+1)w_p\log w_p},\sqrt{6(M+1)w_p\log w_p}\;\bigg],\quad\K_{p,x,2}:=\K_{p,x}\smallsetminus\,{\K_{p,x,1}}.\footnotemark
	\end{gathered}
\end{equation}\par
\footnotetext{Le facteur $\sqrt{6(M+1)}$ apparaissant dans la définition de $\K_{p,x,1}$ pourrait être remplacé par toute constante assez grande comme nous le verrons dans la démonstration du Lemme \ref{estimationrapportsnu}.}
Pour $p\in\J_x$, la quantité $\log s_\nu(p,t)$ est dominée par le terme 
\[2t\log w_p-2\log \Gamma(t)=2t\log\frac{w_p}{t}+2t+O(\log t).\]
 Cela laisse augurer que le maximum est atteint lorsque $t$ est proche de $w_p$. Ces considérations conduisent 
 à conjecturer que la somme intérieure de \eqref{S_2star} est dominée par un intervalle de valeurs de $k$ centré en $w_p$. Définissons, pour $3\leq p\leq x$,
\begin{equation}\label{def_Z_erf}
	\begin{gathered}
		s_{\nu,m}^*(p,t):=s_\nu(p,t)\gS_{\nu,m}(p,t),\quad H_{p,m}(t)=H_{\nu,p,m}(t):=\log s_{\nu,m}^*(p,t)\quad(t\geq 1),\\
		Z_{\nu,m}(x,p):=\sum_{k\in\K_{x}}s_{\nu,m}^*(p,k),\quad \mathscr{Z}_{\nu,m}(x,p):=\int_{\K_{x}}s^*_{\nu,m}(p,t)\d t\quad (0\leq m\leq M).
	\end{gathered}
\end{equation}

\begin{lemma}
	Il existe une constante absolue $c>0$ telle que, pour tout entier $M$ fixé, on ait
	\begin{equation}\label{eq:lemma:estdiffsumint}
		Z_{\nu,m}(x,p)-\mathscr{Z}_{\nu,m}(x,p)\ll s_{\nu,0}^*(p,w_p)\e^{-cw_p}\quad (0\leq m\leq M,\,x\geq 3,\, p\in\J_x).
	\end{equation}
\end{lemma}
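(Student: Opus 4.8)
The plan is to estimate the difference $Z_{\nu,m}(x,p)-\mathscr Z_{\nu,m}(x,p)$ by the classical Euler–Maclaurin / summation-versus-integration comparison, controlling the error through the variation of $s^*_{\nu,m}(p,t)$ on each unit interval. First I would write
\[
Z_{\nu,m}(x,p)-\mathscr Z_{\nu,m}(x,p)=\sum_{k\in\K_x}\Big(s^*_{\nu,m}(p,k)-\int_k^{k+1}s^*_{\nu,m}(p,t)\,\d t\Big)+O\big(\text{boundary terms}\big),
\]
so that each summand is $\ll\sup_{t\in[k,k+1]}\big|\tfrac{\d}{\d t}s^*_{\nu,m}(p,t)\big|=\sup_{t\in[k,k+1]}|H'_{p,m}(t)|\,s^*_{\nu,m}(p,t)$, using the notation $H_{p,m}=\log s^*_{\nu,m}(p,\cdot)$ introduced in \eqref{def_Z_erf}. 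The boundary terms coming from the endpoints of $\K_x$ are harmless because, as noted after \eqref{def_intervalles}, $\log s_\nu(p,t)$ is maximised near $t=w_p\asymp\log_2 x$ and decays like a Gaussian in $t-w_p$; in particular at $t=\lceil\tfrac18\log_2 x-1\rceil$ and $t=\lfloor\log_2 x\rfloor$ the value $s^*_{\nu,m}(p,t)$ is already smaller than $s^*_{\nu,0}(p,w_p)\e^{-c'w_p}$ for a suitable $c'>0$, since these endpoints are at distance $\gg w_p$ from $w_p$.

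Next I would quantify the logarithmic derivative $H'_{p,m}(t)$. From \eqref{defsstar} we have $\log s_\nu(p,t)=2t\log w_p-\log t-2\log\Gamma(t)$, hence $\tfrac{\d}{\d t}\log s_\nu(p,t)=2\log w_p-1/t-2\psi(t)=2\log(w_p/t)+O(1)$ by the asymptotics of the digamma function $\psi$. The factor $\gS_{\nu,m}(p,t)$ defined in \eqref{deffraks} is, for each fixed $m$, a fixed finite combination of the analytic functions $f_j$ and $g_{\nu,m-j}$ evaluated at $r_{t,p}=\tfrac{t-1}{\log u_p}$ and $\gr_{t,p}=\tfrac t{\log_2 p}$, divided by bounded powers of $1-\beta_p$ and $\beta_p$; since $r_{t,p},\gr_{t,p}$ stay in fixed compact subsets of the domains of holomorphy for $t$ in the relevant range (cf. the bounds $\tfrac5{16}\le r_{k,p}\le 17$, $\tfrac{17}{128}\le\gr_{k,p}\le\tfrac53$ recorded before \eqref{e1}), its logarithmic derivative in $t$ is $O(1/\log_2 x)=O(1/w_p)$. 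Therefore $H'_{p,m}(t)=2\log(w_p/t)+O(1)$, uniformly for $p\in\J_x$ and $t\in\K_x$.

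Now I would split the range of $k$ according to $|t-w_p|$. On the inner part where $|k-w_p|\le w_p/\log w_p$, say, one has $|H'_{p,m}(t)|\ll \log\big(1+|t-w_p|/w_p\big)+O(1)\ll 1$, so the contribution to the difference is $\ll\sum_k s^*_{\nu,m}(p,k)$ restricted there; but on the outer part $|k-w_p|>w_p/\log w_p$ the Gaussian-type decay of $s_\nu(p,t)$ (precisely, $\log s_\nu(p,t)-\log s_\nu(p,w_p)\sim -(t-w_p)^2/w_p$ near the maximum, and monotone decay past the inflection) gives $s^*_{\nu,m}(p,k)\ll s^*_{\nu,0}(p,w_p)\e^{-cw_p}$ for each such $k$, and summing over $O(\log_2 x)$ such $k$ costs only a polynomial factor absorbed into the exponential. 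Hence in all cases each summand is $\ll s^*_{\nu,0}(p,w_p)\e^{-cw_p}$ after adjusting $c$; multiplying by the $O(\log_2 x)$ terms in $\K_x$ and absorbing again yields \eqref{eq:lemma:estdiffsumint}.

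The main obstacle is the uniform control of the decay of $s_\nu(p,t)$ away from $t=w_p$: one needs that $\log s_\nu(p,t)$, as a function of $t$, has a genuine maximum near $w_p$ with second derivative $\asymp -1/w_p$ there and monotone behaviour outside a bounded neighbourhood, uniformly in $p\in\J_x$. This follows from $H_{p,0}'(t)=2\log(w_p/t)+O(1)$ and $H_{p,0}''(t)=-2/t-2\psi'(t)+O(1/w_p)=-2/t+O(1/t^2)+O(1/w_p)\asymp -1/w_p$, together with the sign of $H'_{p,0}$ forcing decay on both sides; but care is needed at the left endpoint of $\K_x$, where $t$ can be as small as $\tfrac18\log_2 x$, since there $w_p/t$ is bounded below by a constant $>1$ and one must check the cumulative decay $\int_{t}^{w_p}|H'_{p,0}|$ is indeed $\gg w_p$, which it is because $|H'_{p,0}|\gg 1$ throughout that sub-range.
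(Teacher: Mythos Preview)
Your argument has a genuine gap in the ``inner part'' $|k-w_p|\le w_p/\log w_p$. There you bound each unit-interval error by $\sup|H'_{p,m}|\cdot s^*_{\nu,m}(p,\cdot)$ and, since $|H'_{p,m}|\ll 1$ (indeed $|H'_{p,m}(t)|\asymp|t-w_p|/w_p$ near the peak), conclude that this part contributes $\ll\sum_{k\text{ inner}}s^*_{\nu,m}(p,k)$. But that sum is $\asymp s^*_{\nu,0}(p,w_p)\sqrt{w_p}$ by the Gaussian profile --- it is essentially the full size of $Z_{\nu,m}(x,p)$ itself, not $s^*_{\nu,0}(p,w_p)\e^{-cw_p}$. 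Your final sentence ``hence in all cases each summand is $\ll s^*_{\nu,0}(p,w_p)\e^{-cw_p}$'' therefore does not follow. More generally, any fixed-order Euler--Maclaurin with pointwise derivative bounds can give at best a polynomial saving in $w_p$: one has $(s^*_{\nu,m})^{(j)}(p,t)\asymp s^*_{\nu,m}(p,t)/w_p^{j/2}$ near $t=w_p$, and integrating over an interval of effective length $\asymp\sqrt{w_p}$ leaves an error $\asymp s^*_{\nu,0}(p,w_p)\,w_p^{(1-j)/2}$, never $\e^{-cw_p}$.

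The exponential saving in the paper comes from cancellation that your derivative bounds cannot detect. After the order-$0$ Euler--Maclaurin step one writes the remainder as $\int_{\K_x} B_1(\{t\})(s^*_{\nu,m})'(p,t)\,\d t$, expands $B_1(\{t\})=-\sum_{\ell\ge1}\sin(2\pi\ell t)/(\pi\ell)$ in Fourier series, and integrates by parts to obtain $2\sum_{\ell\ge1}\int_{\K_x} s^*_{\nu,m}(p,t)\cos(2\pi\ell t)\,\d t$. Your second-derivative computation is exactly what is needed to get the envelope $s^*_{\nu,m}(p,t)\ll s^*_{\nu,0}(p,w_p)\e^{-(t-w_p)^2/(6w_p)}$, and then each cosine integral is essentially the Fourier transform of a Gaussian of width $\asymp\sqrt{w_p}$ at frequency $\ell$, hence $\ll\sqrt{w_p}\,\e^{-c\ell^2 w_p}$; summing over $\ell\ge1$ gives the claimed bound. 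This is, in effect, Poisson summation for a Gaussian, and the oscillatory mechanism is indispensable here.
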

\begin{proof}
	Fixons $m\in\llbracket 0,M\rrbracket$ et posons
	\[a_x:=\lceil\tfrac18\log_2 x-1\rceil,\quad b_x:=\lfloor\log_2 x\rfloor\quad (x\geq 3),\quad B_1(t):=\{t\}-\tfrac12\quad (t\in \R).\]
	La formule d'Euler-MacLaurin à l'ordre $0$, fournit
	\begin{equation}\label{eq:majo:diff:ZsZ:EM}
		Z_{\nu,m}(x,p)-\mathscr{Z}_{\nu,m}(x,p)=\tfrac12\big\{s_{\nu,m}^*(p,a_x)+s_{\nu,m}^*(p,b_x)\big\}+\int_{a_x}^{b_x}s_{\nu,m}^{\ast\, \prime}(p,t)B_1(t)\d t.
	\end{equation}
	En développant $B_1(t)$ en série de Fourier, il vient
	\begin{equation}\label{eq:eval:intbernoulli:fourier}
		\begin{aligned}
		\int_{a_x}^{b_x}s_{\nu,m}^{\ast\,\prime}(p,t)B_1(t)\d t&=-\sum_{\ell\geq 1}\frac1{\pi \ell}\int_{a_x}^{b_x}s_{\nu,m}^{\ast\,\prime}(p,t)\sin(2\pi \ell t)\d t\\
		&=2\sum_{\ell\geq 1}\int_{a_x}^{b_x}s_{\nu,m}^{*}(p,t)\cos(2\pi \ell t)\d t.
		\end{aligned}
	\end{equation}
	où la deuxième égalité est obtenue par intégration par parties. \par
	Pour $0\leq m\leq M$, d'après la formule de Taylor-Lagrange à l'ordre $2$, il existe, pour tout $t\in\K_{x}$, un nombre réel $c_t\in \K_x$ tel que
	\begin{equation}
	\label{TaylorLagHp}H_{p,m}(t)=H_{p,m}(w_p)+H'_{p,m}(w_p)(t-w_p)+\tfrac12H''_{p,m}(c_t)(t-w_p)^2.
	\end{equation}
	Puisque $c_t\in\K_x$, nous avons $\varepsilon_x\leqslant 1/c_t\leqslant 8\varepsilon_x$. Par ailleurs, un rapide calcul fournit
	\begin{equation}\label{majoHsec}
		H'_{p,m}(w_p)\ll\varepsilon_x,\quad H_{p,m}''(c_t)=-\frac{2+O(\varepsilon_x)}{c_t}\leq-2\varepsilon_x+O\big(\varepsilon_x^2\big)\leq-\frac{1}{3w_p}+O\big(\varepsilon_x^2\big),
	\end{equation}
	puisque $\varepsilon_xw_p=\sqrt{\beta_p(1-\beta_p)}\geq \tfrac{4}{17}>\tfrac16$. Il suit
	\begin{equation}\label{eq:deltaHpmtwp}
		H_{p,m}(t)-H_{p,m}(w_p)\leq -\frac{(t-w_p)^2}{6w_p}+O(1)\quad(t\in\K_x),
	\end{equation}
	soit
	\begin{equation}\label{eq:majo:snumstar:TL}
		s_{\nu,m}^*(p,t)\ll s_{\nu,m}^*(p,w_p)\e^{-(t-w_p)^2/6w_p}\quad(t\in\K_x).
	\end{equation}
	D'après \eqref{eq:majo:diff:ZsZ:EM}, \eqref{eq:eval:intbernoulli:fourier} et \eqref{eq:majo:snumstar:TL}, il nous faut évaluer
	\[E_{\ell}(p,x):=\int_{a_x}^{b_x}e^{-(t-w_p)^2/6w_p}\cos(2\pi\ell t)\d t\quad(\ell\in\N^*,\,x\geq 3,\,p\in\J_x).\]
	D'une part, nous avons
	\begin{equation}\label{eq:majo:elpx:totale}
		\begin{aligned}
			\int_{\R}e^{-(t-w_p)^2/6w_p}\cos(2\pi\ell t)\d t&=\Re\bigg(\sqrt{6w_p}{\e}^{2\pi i\ell w_p}\int_{\R}\e^{2\pi i\ell\sqrt{6w_p} u-u^2}\d u\bigg)\\
			&=\sqrt{6\pi w_p}\cos(2\pi\ell w_p)\e^{-6\pi^2\ell^2w_p}\ll\sqrt{w_p}\e^{-6\pi^2\ell^2w_p}.
		\end{aligned}
	\end{equation}
	D'autre part, une double intégration par parties fournit
	\begin{equation}\label{eq:majo:elpx:partielle}
		\int_{b_x}^{\infty}e^{-(t-w_p)^2/6w_p}\cos(2\pi\ell t)\d t=\frac{(b_x-w_p)\e^{-(b_x-w_p)^2/6w_p}}{12\pi^2\ell^2w_p}+O\bigg(\int_{b_x}^{\infty}\frac{t^2{\e^{-(t-w_p)^2/6w_p}}\d t}{\ell^2w_p^2}\bigg).
	\end{equation}
	Une majoration analogue de l'intégrale sur $]-\infty,a_x]$ couplée aux estimations \eqref{eq:majo:elpx:totale} et \eqref{eq:majo:elpx:partielle} impliquent l'existence d'une constante $c_1>0$ telle que
	\begin{equation}\label{eq:majoEellpxfinale}
		E_{\ell}(p,x)\ll\frac{\e^{-c_1w_p}}{\ell^2}\quad(p\in\J_x)\cdot
	\end{equation}
	Le résultat annoncé découle alors des estimations \eqref{eq:majo:diff:ZsZ:EM}, \eqref{eq:eval:intbernoulli:fourier}, \eqref{eq:majo:snumstar:TL}, \eqref{eq:majoEellpxfinale} et de la majoration $\gS_{\nu,m}(p,w_p)\ll \gS_{\nu,0}(p,w_p)\ (x\geq 3,\,p\in\J_x,\,0\leq m\leq M)$.
\end{proof}
\par 
Définissons, pour $-1<v<2$,
\begin{align*}
	\sigma_{\nu,1}(v)&:=
	\begin{cases}
		\displaystyle\sum_{q}\Big\{\log\Big(1-\frac1q\Big)+\frac{1}{q-1+v}\Big\}&(\nu=\omega),\\
		\displaystyle\sum_{q}\Big\{\log\Big(1-\frac1q\Big)+\frac{1}{q-v}\Big\}&(\nu=\Omega),
	\end{cases}\quad \sigma_{\nu,2}(v):=\begin{cases}
		\displaystyle\sum_q\frac{-1}{(q-1+v)^2}&(\nu=\omega),\\
		\displaystyle\sum_q\frac{1}{(q-v)^2}&(\nu=\Omega),
	\end{cases}
\end{align*}
et posons, pour $0<v<2$,
\begin{equation}\label{defjB}
	\begin{gathered}
		j_{\nu,1}(v):=v\sigma_{\nu,1}(v)-\frac{\psi(1+1/v)+(1-v^2)\gamma}{v},\quad j_{\nu,2}(v):=v^2\sigma_{\nu,2}(v)-\frac{\psi'(1+1/v)}{v^2},\\
		Z^*_{\nu}(x,p;M):=\sum_{k\in\K_x}s_\nu(p,k)\sum_{0\leq m\leq M}\gS_{\nu,m}(p,k)\varepsilon_x^m=\sum_{0\leq m\leq M}Z_{\nu,m}(x,p)\varepsilon_x^m\quad (3\leq p\leq x).
	\end{gathered}	
\end{equation}
Dans la suite, notons $\delta_v:=\sqrt{(1-v)/v}\quad(0<v<1)$. Le résultat suivant fournit une estimation de la quantité $Z_\nu^*(x,p;M)$ pour $p\in\J_x$.

\begin{lemma}\label{estimationrapportsnu} Soit $M$ un entier fixé. Il existe une suite de fonctions $\{\mathfrak{z}_{\nu,m}(v)\}_{m\in\N}\in\R^{]1/5,1[}$, telles que, uniformément pour $x\geq 3$ et $p\in\J_x$, on ait
	\begin{equation}\label{estZnu}
		Z_\nu^*(x,p;M)=s_{\nu,0}^*(p,w_p)\sqrt{\pi w_p}\bigg\{\sum_{0\leq m\leq M}\mathfrak{z}_{\nu,m}(\beta_p)\varepsilon_x^m+O\big(\varepsilon_x^{M+1}\big)\bigg\}.
	\end{equation}
	En particulier, $\mathfrak{z}_{\nu,0}=1$ et, pour $\tfrac15<v<1$, nous avons
	\begin{equation}\label{valeurznu1}
		\mathfrak{z}_{\nu,1}(v)=\frac{A_{\nu,v}}{4\sqrt{v(1-v)}}-\frac{B_v\sqrt{v}}{(1-v)^{3/2}}-\frac{C_{\nu,v}\sqrt{1-v}}{v^{3/2}},
	\end{equation}
	avec
	\begin{equation}\label{def:ABCDE}
		\begin{gathered}
			A_{\nu,v}=j_{\nu,1}(\delta_v)^2+j_{\nu,1}(\delta_v)+j_{\nu,2}(\delta_v)-\tfrac1{12},\quad B_v=\frac{h_0''(1/\delta_v)}{2h_0(1/\delta_v)},\quad C_{\nu,v}=\frac{F_\nu''(\delta_v)}{2F_\nu(\delta_v)}\cdot
		\end{gathered}
	\end{equation}
\end{lemma}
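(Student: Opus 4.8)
The plan is to evaluate $Z_\nu^*(x,p;M)=\sum_{0\le m\le M}Z_{\nu,m}(x,p)\varepsilon_x^m$ by the Laplace method, the dominant range of the inner summation being $k$ near $w_p$. First I would pass from sums to integrals: by \eqref{eq:lemma:estdiffsumint} one has, for each $0\le m\le M$, $Z_{\nu,m}(x,p)=\mathscr{Z}_{\nu,m}(x,p)+O\bigl(s_{\nu,0}^*(p,w_p)\e^{-cw_p}\bigr)$, and since $w_p\asymp1/\varepsilon_x$ uniformly on $\J_x$ this error is smaller than any fixed power of $\varepsilon_x$ times $s_{\nu,0}^*(p,w_p)$; summing over the finitely many $m$ reduces the problem to the asymptotic expansion of each
\[\mathscr{Z}_{\nu,m}(x,p)=\int_{\K_x}s_{\nu,m}^*(p,t)\d t=\int_{\K_x}s_{\nu,0}^*(p,t)\,\frac{\gS_{\nu,m}(p,t)}{\gS_{\nu,0}(p,t)}\d t,\]
the second form (with the positive weight $s_{\nu,0}^*(p,t)=s_\nu(p,t)h_0(r_{t,p})F_\nu(\gr_{t,p})$) being convenient for the cases $m\ge1$, where $\gS_{\nu,m}$ may change sign while its ratio to $\gS_{\nu,0}$ stays bounded on the compact range of $r_{t,p},\gr_{t,p}$ in play.

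For $\mathscr{Z}_{\nu,0}(x,p)$ I would set $t=w_p+s$. By the Gaussian bound \eqref{eq:majo:snumstar:TL}, the part of the integral with $t-w_p\in\K_{p,x,2}$ is $\ll s_{\nu,0}^*(p,w_p)\sqrt{w_p}\,w_p^{-(M+1)}$, hence absorbed in the remainder of \eqref{estZnu} (should the constant $\sqrt{6(M+1)}$ in the definition of $\K_{p,x,1}$ turn out to be insufficient at some point, one uses the flexibility noted in the corresponding footnote). On the central range $t-w_p\in\K_{p,x,1}$ I would Taylor-expand $\phi(s):=\log\{s_{\nu,0}^*(p,w_p+s)/s_{\nu,0}^*(p,w_p)\}=\sum_{j\ge1}a_js^j$; writing $\log s_{\nu,0}^*(p,t)=2t\log w_p-\log t-2\log\Gamma(t)+\log h_0(r_{t,p})+\log F_\nu(\gr_{t,p})$ and using Stirling's formula one gets $a_1\ll\varepsilon_x$, $a_2=-1/w_p+O(\varepsilon_x^2)$, and $a_j\ll\varepsilon_x^{j-1}$ for $j\ge2$. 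Substituting $s=u/\sqrt{-a_2}$ turns the quadratic part into $-u^2$, sends the central range to $|u|\ll\sqrt{\log w_p}$, and leaves the exponent $\sum_{j\ne2}b_ju^j$ with $b_1\ll\varepsilon_x^{1/2}$ and $b_j\ll\varepsilon_x^{(j-2)/2}$ for $j\ge3$. Expanding $\e^{\sum_{j\ne2}b_ju^j}$ to finite order plus remainder, integrating termwise, extending each Gaussian moment from $|u|\ll\sqrt{\log w_p}$ to $\R$ (at a cost $O(w_p^{-A})$ for any fixed $A$) and using $\int_\R u^{2l}\e^{-u^2}\d u=\Gamma(l+\tfrac12)$ together with the vanishing of the odd moments, one obtains an asymptotic expansion in integer powers of $\varepsilon_x$ — the half-integer orders always carry odd powers of $u$ and disappear. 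Since $(-a_2)^{-1/2}=\sqrt{w_p}\{1+O(\varepsilon_x)\}$, this gives $\mathscr{Z}_{\nu,0}(x,p)=s_{\nu,0}^*(p,w_p)\sqrt{\pi w_p}\{1+O(\varepsilon_x)\}$, whence $\mathfrak{z}_{\nu,0}=1$.

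The integrals with $m\ge1$ reduce to this analysis: the ratio $\gS_{\nu,m}/\gS_{\nu,0}$ is analytic in $r_{t,p},\gr_{t,p}$, which move by $O(\varepsilon_x^{1/2})$ over the Gaussian scale, so carrying it through the same computation yields $\mathscr{Z}_{\nu,m}(x,p)=s_{\nu,0}^*(p,w_p)\sqrt{\pi w_p}\,\{\gS_{\nu,m}(p,w_p)/\gS_{\nu,0}(p,w_p)\}\{1+O(\varepsilon_x)\}$. Consequently $\mathfrak{z}_{\nu,1}(\beta_p)$ is the sum of the $\varepsilon_x$-coefficient of $\mathscr{Z}_{\nu,0}$ and of $\gS_{\nu,1}(p,w_p)/\gS_{\nu,0}(p,w_p)$ at the saddle. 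Here the key observations are $\gr_{w_p,p}=\delta_{\beta_p}$ and $r_{w_p,p}=1/\delta_{\beta_p}+O(\varepsilon_x)$ (since $\log u_p=(1-\beta_p)\log_2x$ and $\log_2p=\beta_p\log_2x$); using them together with $f_0=h_0$, $g_{\nu,0}=F_\nu$ and $f_1(v)=\gC_{1,h_0}(v)=-\tfrac12vh_0''(v)$, $g_{\nu,1}(v)=\gC_{1,F_\nu}(v)=-\tfrac12vF_\nu''(v)$ (Lemma~\ref{intgen}) and the definitions \eqref{def:ABCDE}, one finds
\[\frac{\gS_{\nu,1}(p,w_p)}{\gS_{\nu,0}(p,w_p)}=-\frac{B_{\beta_p}\sqrt{\beta_p}}{(1-\beta_p)^{3/2}}-\frac{C_{\nu,\beta_p}\sqrt{1-\beta_p}}{\beta_p^{3/2}}+O(\varepsilon_x),\]
which are the last two terms of \eqref{valeurznu1}. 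The remaining term $A_{\nu,\beta_p}/\bigl(4\sqrt{\beta_p(1-\beta_p)}\bigr)$ comes from the $\varepsilon_x$-coefficient of $\mathscr{Z}_{\nu,0}$ computed above: the square $\tfrac14b_1^2$ produces $j_{\nu,1}(\delta_{\beta_p})^2$ — since $w_pa_1\to j_{\nu,1}(\delta_{\beta_p})$, the limiting value of $w_p$ times the first logarithmic derivative of $h_0(r_{t,p})F_\nu(\gr_{t,p})$ at $t=w_p$ — the cross term $\tfrac34b_1b_3$ produces $j_{\nu,1}(\delta_{\beta_p})$, the factor $(-a_2)^{-1/2}=\sqrt{w_p}\{1+O(\varepsilon_x)\}$ produces $j_{\nu,2}(\delta_{\beta_p})$ through the second logarithmic derivative (using $(\log h_0)''(z)=-\psi'(1+z)$ and $(\log F_\nu)''(z)=\sigma_{\nu,2}(z)$), and the purely Gamma-theoretic contributions $\tfrac34b_4+\tfrac{15}{16}b_3^2$ supply the constant $-\tfrac1{12}$. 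Assembling these and dividing by $s_{\nu,0}^*(p,w_p)\sqrt{\pi w_p}$ recovers \eqref{valeurznu1}; the same scheme at higher $m$ produces real functions $\mathfrak{z}_{\nu,m}$ of $\beta_p$ alone.

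Uniformity in $p\in\J_x$ will pose no difficulty: $\beta_p$ stays in $[\tfrac35,\tfrac{16}{17}]\subset\,]\tfrac15,1[$, where $\delta_{\beta_p}$ and $1/\delta_{\beta_p}$ remain in ranges on which $h_0$, $F_\nu$, $\psi$ and its derivatives, and $\sigma_{\nu,1},\sigma_{\nu,2}$ are analytic (note that $\gr_{w_p,p}=\delta_{\beta_p}\le\sqrt{2/3}<2=r_\Omega$ for $\nu=\Omega$, which is exactly why the natural domain of the $\mathfrak{z}_{\nu,m}$ is $]\tfrac15,1[$). The main obstacle is the bookkeeping of the Laplace expansion: because the width of the saddle is $\sqrt{w_p}\asymp\varepsilon_x^{-1/2}$, integer and half-integer powers of $\varepsilon_x$ are intertwined throughout, and one must check at every order that the half-integer terms cancel and that the surviving integer-order coefficients depend on $\beta_p$ only; for $\mathfrak{z}_{\nu,1}$ this amounts to tracking the four contributions above precisely enough to recover the combination $A_{\nu,v}=j_{\nu,1}(\delta_v)^2+j_{\nu,1}(\delta_v)+j_{\nu,2}(\delta_v)-\tfrac1{12}$.
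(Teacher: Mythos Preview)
Your proposal is correct and follows essentially the same route as the paper: pass from sum to integral via \eqref{eq:lemma:estdiffsumint}, apply the Laplace method around $t=w_p$ by Taylor expanding $\log s_{\nu,m}^*(p,t)$, cut at $|t-w_p|\asymp\sqrt{w_p\log w_p}$ using \eqref{eq:majo:snumstar:TL}, and integrate the resulting polynomial against a Gaussian. The only organisational differences are that the paper keeps the variable $v=t-w_p$ and evaluates the moments $\int v^{2j}\e^{-v^2/w_p}\d v$ directly (rather than normalising via $s=u/\sqrt{-a_2}$), and that for $m\ge1$ the paper expands $\log\gS_{\nu,m}(p,t)$ itself instead of factoring out $\gS_{\nu,0}$; your choice of carrying the bounded ratio $\gS_{\nu,m}/\gS_{\nu,0}$ is arguably cleaner since it sidesteps any sign issue in $\gS_{\nu,m}$, but the resulting expansion and the identification of $\mathfrak{z}_{\nu,1}$ are the same.
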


\begin{proof} Écrivons, pour $m\in\N$, $x\geq 3$, $p\in\J_x$,
	\begin{equation}\label{defznul}
		\sZ_{\nu,m}(x,p)=s_{\nu,m}^*(p,w_p)\int_{\K_{p,x}}\frac{s_{\nu,m}^*(p,w_p+v)}{s_{\nu,m}^*(p,w_p)}\d v=:s_{\nu,m}^*(p,w_p)\sZ^*_{\nu,m}(x,p),
	\end{equation}
	et remarquons que, d'après \eqref{eq:lemma:estdiffsumint}, nous avons
	\begin{equation}\label{eq:reecr:Z^*sZm}
		Z_{\nu}^*(x,p;M)=\sum_{0\leq m\leq M}Z_{\nu,m}(x,p)\varepsilon_x^m=\sum_{0\leq m\leq M}\sZ_{\nu,m}(x,p)\varepsilon_x^m+O\Big(s_{\nu,0}^*(p,w_p)\varepsilon_x^{M+1/2}\Big).
	\end{equation}
	Rappelons la définition de $H_{p,m}(t)$ en \eqref{def_Z_erf}. Notre premier objectif consiste à expliciter un développement de Taylor pour $H_{p,m}(t)$ autour du point $t=w_p$. À cette fin, nous évaluons les dérivées logarithmiques des fonctions de $t$ apparaissant au membre de droite de \eqref{defsstar}.\par 
Notons $B_n$ le $n$-ième nombre de Bernoulli. En appliquant la formule d'Euler-Maclaurin à la fonction logarithme, nous obtenons la formule de Stirling complexe (voir, {\it e.g.}, \cite[th. II.0.12]{tenenbaum_livre})
	\begin{equation}\label{stirlingcomplexe}
		\log\Gamma(s)=(s-\tfrac12)\log s-s+\tfrac12\log2\pi+\sum_{1\leq k\leq M}\frac{(-1)^{k+1}B_{k+1}}{k(k+1)s^k}+O\Big(\frac1{|s|^{M+1}}\Big)\quad(s\in\C\smallsetminus \R_-).
	\end{equation}
	La fonction $\log\Gamma$ étant holomorphe sur $\C\smallsetminus\R_-$, une dérivation terme à terme de \eqref{stirlingcomplexe} fournit, pour $1\leq j\leq M$, $v>0$, le développement
	\begin{equation}\label{devpolyg}
		\psi^{(j)}(v)=(-1)^{j+1}\bigg\{\frac{(j-1)!}{v^j}+\frac{j!}{2v^{j+1}}+\sum_{1\leq k\leq M-j-1}\frac{B_{k+1}(k+j)!}{(k+1)!v^{j+k+1}}+O\Big(\frac{1}{v^{M+1}}\Big)\bigg\}\cdot
	\end{equation}
  	Rappelons la définition des $\gS_{\nu,m}(p,t)$ en \eqref{deffraks} et posons pour $m\in\N$, $j\in\N^*$,
	\[\Xi_{\nu,m,j}(p,w_p):=\bigg[\frac{\d^j\log \gS_{\nu,m}(p,t)}{\d t^j}\bigg]_{t=w_p}\quad(p\in\J_x).\]
	Notons $\delta_{ij}$ le symbole de Kronecker. En dérivant la formule
	\[H_{p,m}(t)=2t\log w_p-\log t-2\log \Gamma(t)+\log \gS_{\nu,m}(p,t),\]
nous obtenons, pour tous $m\in\N$, $j\in\N^*$,
	\[H_{p,m}^{(j)}(t)=2\delta_{1j}\log w_p+\frac{(-1)^j(j-1)!}{t^j}-2\psi^{(j-1)}(t)+\Xi_{\nu,m,j}(p,t),\]
	soit, d'après \eqref{stirlingcomplexe} et \eqref{devpolyg},
	\begin{equation}\label{estderH}
		\begin{aligned}
			H_{p,m}^{(j)}(w_p)
			&=2(-1)^{j+1}\bigg\{\frac{(j-2)!}{w_p^{j-1}}+\sum_{1\leq k\leq M-j}\frac{B_{k+1}(k+j-1)!}{(k+1)!w_p^{j+k}}\bigg\}+\Xi_{\nu,m,j}(p,w_p)+O\big(\varepsilon_x^{M+1}\big)\\
			&=2(-1)^{j+1}\sum_{0\leq k \leq\lfloor (M-j+1)/2\rfloor}\frac{B_{2k}(2k+j-2)!}{(2k)!w_p^{2k+j-1}}+\Xi_{\nu,m,j}(p,w_p)+O\big(\varepsilon_x^{M+1}\big),
		\end{aligned}
	\end{equation}
	puisque $B_{2k+1}=0$ pour $k\geq 1$.\par
	Rappelons les définitions des intervalles $\K_{p,x}$, $\K_{p,x, 1}$, et $\K_{p,x, 2}$ en \eqref{def_intervalles}. Pour $m\in\N$, désignons respectivement par $\sZ_{\nu,m,1}^*(x,p)$ et $\sZ_{\nu,m,2}^*(x,p)$, les contributions à $\sZ_{\nu,m}^*(x,p)$ des intervalles $\K_{p,x,1}$ et $\K_{p,x,2}$.\par
	La définition des fonctions $\gS_{\nu,m}(p,k)$ en \eqref{deffraks} implique $\Xi_{\nu,m,j}(p)\asymp \varepsilon_x^j$ $(p\in\J_x,\,j\in\N)$. Par ailleurs, l'estimation \eqref{estderH} implique $H_{p,m}^{(j)}(w_p)\ll\varepsilon_x^{j-1}\ (j\geq 2)$. Un développement de Taylor à l'ordre $2M+1$ fournit donc, pour $v\in\K_{p,x,1}$,
	\begin{equation}\label{TaylorHp}
		H_{p,m}(w_p+v)=\sum_{0\leq j\leq 2M}\frac{H_{p,m}^{(j)}(w_p)v^j}{j!}+O\big(v^{2M+1}\varepsilon_x^{2M}\big),
	\end{equation}
	soit
	\begin{equation}\label{rapportsnustar}
		\frac{s_{\nu,m}^*(p,w_p+v)}{s_{\nu,m}^*(p,w_p)}=\Big\{1+O\Big(v^{2M+1}\varepsilon_x^{2M}\Big)\Big\}\exp\bigg(\sum_{1\leq j\leq 2M}\frac{H_{p,m}^{(j)}(w_p)v^j}{j!}\bigg).
	\end{equation}
	Il suit, par intégration sur $\K_{p,x,1}$,
	\[\sZ_{\nu,m,1}^*(x,p)=\int_{\K_{p,x,1}}\Big\{1+O\Big(v^{2M+1}\varepsilon_x^{2M}\Big)\Big\}\exp\bigg(\sum_{1\leq j\leq 2M}\frac{H_{p,m}^{(j)}(w_p)v^j}{j!}\bigg)\d v\quad(x\geq 3,\, p\in\J_x).\]
	L'estimation \eqref{estderH} fournit alors un développement de $\sZ_{\nu,m,1}^*(x,p)$ selon les puissances de $\varepsilon_x$. Plus précisément, pour $m\in\N$, $p\in\J_x$, les estimations
	\begin{gather*}
		H'_{p,m}(w_p)=\Xi_{\nu,m,1}(p,w_p)+O\big(\varepsilon_x^2\big),\quad H''_{p,m}(w_p)=-\frac{2}{w_p}+\Xi_{\nu,m,2}(p,w_p)+O\big(\varepsilon_x^3\big),\\
		H^{(j)}_{p,m}(w_p)\ll\varepsilon_x^{j-1}\ (j\geq 2),
	\end{gather*}
	fournissent, en développant en série le membre de droite de \eqref{rapportsnustar}, l'existence d'une suite de fonctions réelles $\{z_{p,m;j}(v)\}_{j\in\N}$ telle que
	\begin{align*}
		\sZ_{\nu,m,1}^*(x,p)=\int_{\K_{p,x,1}}\e^{-v^2/w_p}\bigg\{\sum_{0\leq j\leq M}z_{p,m;j}(w_p)v^{2j}+O\big(\varepsilon_x^{M+1}\big)\bigg\}\d v\quad(x\geq 3,\, p\in\J_x),
	\end{align*}
	où nous avons utilisé le fait que, par symétrie du domaine d'intégration, la contribution des termes impairs est nulle. En particulier, $z_{p,m;0}=1$. Posons, pour $j\in\N^*$, $p\in\J_x$,
	\begin{equation}\label{defIscript}
		\I_{j,p}(x):=\int_{\K_{p,x,1}}v^{2j}\e^{-v^2/w_p}\d v=w_p^{j+1/2}\Gamma\big(j+\tfrac12\big)\big\{1+O\big(\varepsilon_x^{M+1}\big)\big\}\quad(x\geq 3),
	\end{equation}
	de sorte que
	\begin{equation}\label{estznul01}
		\sZ_{\nu,m,1}^*(x,p)=\sqrt{\pi w_p}+\sum_{1\leq j\leq M}z_{p,m;j}(w_p)\I_{j,p}(x)+O\big(\varepsilon_x^{M+1/2}\big)\quad (p\in\J_x).\\
	\end{equation}
	Nous déduisons des estimations \eqref{defIscript} et \eqref{estznul01}, l'existence d'une suite $\{z^*_{p,m;j}(v)\}_{j\in\N}$ de fonctions réelles vérifiant
	\begin{equation}\label{estznul1}
		\sZ_{\nu,m,1}^*(x,p)=\sqrt{\pi w_p}\bigg\{\sum_{0\leq j\leq M}z^*_{p,m;j}(\beta_p)\varepsilon_x^j+O\big(\varepsilon_x^{M+1}\big)\bigg\}\quad(x\geq 3,\,p\in\J_x),
	\end{equation}
	avec $z^*_{p,m;0}=1$. En particulier, un rapide calcul permet d'obtenir
	\[\Xi_{\nu,0,1}(p,w_p)=\frac{j_{\nu,1}(\alpha_p)}{w_p}\quad(p\in\J_x),\quad \Xi_{\nu,0,2}(p,w_p)=\frac{j_{\nu,2}(\alpha_p)}{w_p^2}\quad(p\in\J_x),\]
	de sorte que pour $m=0$, nous avons,
	\begin{equation}\label{estznul11}
		\begin{aligned}
			\sZ_{\nu,0,1}^*(x,p)&=\sqrt{\pi w_p}+\frac{\big\{j_{\nu,1}(\alpha_p)^2+j_{\nu,2}(\alpha_p)\big\}\I_{1,p}(x)}{2w_p^2}\\
			&\mspace{20 mu}+\big\{\tfrac1{24}H_{p,0}^{(4)}(w_p)+\tfrac16H_{p,0}'(w_p)H_{p,0}'''(w_p)\big\}\I_{2,p}(x)+\tfrac1{72}H_{p,0}'''(w_p)^2\I_{3,p}(x)+O(\varepsilon_x^{3/2})\\
			&=\sqrt{\pi w_p}\bigg\{1+\frac{A_{\nu,\beta_p}}{4\sqrt{\beta_p(1-\beta_p)}}\varepsilon_x+O(\varepsilon_x^2)\bigg\}.
		\end{aligned}
	\end{equation}
	De manière analogue, nous avons
	\begin{equation}\label{estznul21}
		\sZ_{\nu,1,1}^*(x,p)=\sqrt{\pi w_p}\{1+O(\varepsilon_x)\}.
	\end{equation}\par 
	Enfin, d'après \eqref{eq:majo:snumstar:TL}, une intégration sur $\K_{p,x,2}$ fournit, pour $m\in\N$,
	\begin{equation}\label{majoration_erreur_2}
		\begin{aligned}
			\sZ_{\nu,m,2}^*(x,p)\ll\int_{\K_{p,x,2}}\e^{-v^2/6w_p}\d v\ll\sqrt{w_p}\bigg\{1-\erf\bigg(\sqrt{6(M+1)\log w_p}\bigg)\bigg\}\ll\varepsilon_x^{M+1/2}.
		\end{aligned}
	\end{equation}
	Ainsi la contribution de l'intervalle $\K_{p,x,2}$ à $\sZ_{\nu,m}^*(x,p)$ peut être englobée dans le terme d'erreur de $\sZ_{\nu,m,1}^*(x,p)$. L'existence du développement de $Z_\nu^*(x,p)$ se déduit alors directement des définitions \eqref{deffraks}, \eqref{def_Z_erf} et \eqref{defznul} ainsi que des estimations \eqref{estznul1} et \eqref{majoration_erreur_2}. L'expression de $\mathfrak{z}_{\nu,1}$ se déduit quant à elle des estimations \eqref{eq:reecr:Z^*sZm}, \eqref{estznul11} et \eqref{estznul21}.
\end{proof}

 
Nous sommes désormais en mesure d'évaluer la somme intérieure de $S^{**}_{\nu,\iota}(x;M)$ définie en \eqref{S_2star}. Posons
\begin{equation}\label{def_K_wp}
	\begin{gathered}
		K_\nu(v):=\frac{\HH_\nu(v)\e^{\gamma( v-1/v)}}{\Gamma(1+1/ v)}\quad(0< v<2),\quad \kappa(v):=2\sqrt{v(1-v)}+v-1\quad(0\leq v\leq 1),\\
		\varrho_\nu(v):=\frac{v^{1/4}K_\nu(\sqrt{(1-v)/v})}{2\sqrt\pi(1-v)^{3/4}}\quad\big(\tfrac15<v<1\big).
	\end{gathered}
\end{equation}


\begin{property}\label{evalSint}
	Soit $M$ un entier fixé. Il existe une suite de fonctions $\{\mathfrak{s}_{\nu;m}(v)\}_{m\in\N}\in\R^{]1/5,1[}$, telles que, uniformément pour $x\geq 3$, on ait
	\begin{equation}\label{eval_S_intermediaire}
		S_{\nu,\iota}(x)=\frac{x}{\sqrt{\log_2 x}}\sum_{p\in\J_x}\frac{\varrho_\nu(\beta_p)(\log x)^{\kappa(\beta_p)}}{p}\bigg\{\sum_{0\leq m\leq M}\mathfrak{s}_{\nu;m}(\beta_p)\varepsilon_x^m+O\Big(\varepsilon_x^{M+1}\Big)\bigg\}.
	\end{equation}
	En particulier, $\mathfrak{s}_{\nu;0}=1$ et
	\begin{equation}
		\mathfrak{s}_{\nu;1}(v)=\frac{A_{\nu,v}^*}{4\sqrt{v(1-v)}}-\frac{B_{v}\sqrt v}{(1-v)^{3/2}}-\frac{C_{\nu,v}\sqrt{1-v}}{v^{3/2}},
	\end{equation}
	où $B_v$ et $C_{\nu,v}$ sont définies en \eqref{def:ABCDE}, et 
	\[A_{\nu,v}^*=\frac{4\{\gamma+\psi(1+1/\delta_v)\}}{\delta_v}+j_{\nu,1}(\delta_v)+j_{\nu,1}(\delta_v)^2+j_{\nu,2}(\delta_v)-\tfrac3{4}.\]
\end{property}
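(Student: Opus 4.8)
The plan is to chain together the three reductions already in hand and then to carry out a routine asymptotic expansion of the summand. Combining the estimates \eqref{S_S_star} and \eqref{est_s_2star}, and recognising the inner sum over $k$ and $m$ in \eqref{S_2star} as the quantity $Z_\nu^*(x,p;M)$ of \eqref{defjB}, an application of Lemma \ref{estimationrapportsnu} yields, for every fixed $M$,
\[
S_{\nu,\iota}(x)=\frac{x}{\log x}\sum_{p\in\J_x}\frac{\log p}{p\log u_p}\,s_{\nu,0}^*(p,w_p)\sqrt{\pi w_p}\,\Big\{\sum_{0\leq m\leq M}\mathfrak{z}_{\nu,m}(\beta_p)\varepsilon_x^m+O\big(\varepsilon_x^{M+1}\big)\Big\}+O\big(x(\log x)^{1/\varphi-1/250}\big).
\]
Two things then remain: to recast the prefactor of the bracket in the form $(x/\sqrt{\log_2 x})\,\varrho_\nu(\beta_p)(\log x)^{\kappa(\beta_p)}/p$, and to dispose of the additive error. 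For the latter we note that $\kappa$ is smooth on $[3/5,16/17]$ with maximum $1/\varphi$ attained at the interior point $v_\ast=\varphi/\sqrt5$ (where $\sqrt{v_\ast(1-v_\ast)}=1/\sqrt5$); restricting the $p$-sum to a small fixed neighbourhood of $v_\ast$ in $\beta_p$, whose primes have reciprocals summing to $\asymp\log_2 x$ by Mertens' theorem, and using that $\varrho_\nu$ is bounded below on $[3/5,16/17]$ since $\HH_\nu(\delta_{\beta_p})>0$ there, we get $\sum_{p\in\J_x}\varrho_\nu(\beta_p)(\log x)^{\kappa(\beta_p)}/p\gg(\log x)^{1/\varphi-1/500}$; hence $x(\log x)^{1/\varphi-1/250}$ is $O(\varepsilon_x^{M+1})$ times the main term and is absorbed into its relative error.

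For the prefactor we use three independent expansions, all uniform for $p\in\J_x$ because $\beta_p\in[3/5,16/17]$ confines $\delta_{\beta_p}=\alpha_p$ to the compact interval $[1/4,\sqrt{2/3}]$, well inside $(0,r_\nu)$, so that $h_0$ (entire and zero-free), $F_\nu$ (holomorphic and positive there) and the Stirling expansion apply with constants independent of $p$. First, the identities $\log p=(\log x)^{\beta_p}$, $\log u_p=(1-\beta_p)\log_2 x$, $w_p=\sqrt{\beta_p(1-\beta_p)}\log_2 x$ of \eqref{defwp} give $\e^{2w_p}=(\log x)^{2\sqrt{\beta_p(1-\beta_p)}}$ and $\sqrt{w_p}=(\beta_p(1-\beta_p))^{1/4}\sqrt{\log_2 x}$. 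Second, from $s_\nu(p,w_p)=w_p^{2w_p}/(w_p\Gamma(w_p)^2)$ in \eqref{defsstar} and the complex Stirling formula \eqref{stirlingcomplexe} we obtain the full asymptotic expansion $s_\nu(p,w_p)=(\e^{2w_p}/2\pi)\{1-1/(6w_p)+\cdots\}$, where $1/w_p=\varepsilon_x/\sqrt{\beta_p(1-\beta_p)}$. Third, $s_{\nu,0}^*(p,w_p)=s_\nu(p,w_p)\gS_{\nu,0}(p,w_p)$ with $\gS_{\nu,0}(p,w_p)=h_0(r_{w_p,p})F_\nu(\gr_{w_p,p})$ by \eqref{deffraks} and Theorem \ref{corollaire_alladi}; here $\gr_{w_p,p}=\alpha_p=\delta_{\beta_p}$ exactly whereas $r_{w_p,p}=1/\delta_{\beta_p}-\varepsilon_x/(1-\beta_p)$, so a Taylor expansion of $h_0$ about $1/\delta_{\beta_p}$, together with $(\log h_0)'(z)=-\gamma-\psi(1+z)$, gives
\[
h_0(r_{w_p,p})=h_0(1/\delta_{\beta_p})\,\Big\{1+\big(\gamma+\psi(1+1/\delta_{\beta_p})\big)\frac{\varepsilon_x}{1-\beta_p}+\cdots\Big\},
\]
while $h_0(1/\delta_{\beta_p})F_\nu(\delta_{\beta_p})=K_\nu(\delta_{\beta_p})$ by \eqref{defFnu} and \eqref{def_K_wp}.

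Assembling the order-zero terms, the powers of $\log x$ combine to $(\log x)^{\beta_p-1+2\sqrt{\beta_p(1-\beta_p)}}=(\log x)^{\kappa(\beta_p)}$, the powers of $\log_2 x$ to $(\log_2 x)^{-1/2}$, and the remaining constants to $\frac{\beta_p^{1/4}K_\nu(\delta_{\beta_p})}{2\sqrt\pi(1-\beta_p)^{3/4}}=\varrho_\nu(\beta_p)$, which establishes \eqref{eval_S_intermediaire} with $\mathfrak{s}_{\nu;0}=1$. Re-expanding the product of the three brackets (Stirling, the $h_0$-shift, and the $\mathfrak{z}$-bracket of Lemma \ref{estimationrapportsnu}) in powers of $\varepsilon_x$ produces the sequence $\{\mathfrak{s}_{\nu;m}\}$; at first order there are no cross-terms, so
\[
\mathfrak{s}_{\nu;1}(v)=\mathfrak{z}_{\nu,1}(v)-\frac{1}{6\sqrt{v(1-v)}}+\frac{\gamma+\psi(1+1/\delta_v)}{1-v},
\]
and substituting \eqref{valeurznu1} and simplifying with $\delta_v\sqrt{v(1-v)}=1-v$ turns this into the stated formula, with $A_{\nu,v}^*=A_{\nu,v}+4\{\gamma+\psi(1+1/\delta_v)\}/\delta_v-\tfrac23$. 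The genuine difficulty is this first-order bookkeeping: one must check, with the correct signs and constants, that the Stirling correction $-1/(6w_p)$ and the $h_0$-shift combine with the already-computed $\mathfrak{z}_{\nu,1}$ so as to collapse exactly into $A_{\nu,v}^*$; the higher coefficients $\mathfrak{s}_{\nu;m}$, $m\geq2$, are obtained the same way and are not needed explicitly.
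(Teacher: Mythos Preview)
Your proposal is correct and follows essentially the same route as the paper: you chain \eqref{S_S_star}, \eqref{est_s_2star} and Lemma~\ref{estimationrapportsnu}, then expand $s_{\nu,0}^*(p,w_p)\sqrt{\pi w_p}$ via Stirling and a Taylor shift of $h_0$ at $1/\delta_{\beta_p}$ to recover $\varrho_\nu(\beta_p)(\log x)^{\kappa(\beta_p)}$, exactly as in \eqref{estxigamma}--\eqref{s_w}. Your first-order bookkeeping $\mathfrak{s}_{\nu;1}(v)=\mathfrak{z}_{\nu,1}(v)-\tfrac{1}{6\sqrt{v(1-v)}}+\tfrac{\gamma+\psi(1+1/\delta_v)}{1-v}$ and the identity $A_{\nu,v}^*=A_{\nu,v}+4\{\gamma+\psi(1+1/\delta_v)\}/\delta_v-\tfrac23$ are correct; the only cosmetic addition is your explicit lower bound for the main sum to absorb the additive error from \eqref{S_S_star}, which the paper treats in one line.
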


\begin{proof}
	Compte tenu de la définition de $S_{\nu,\iota}^{**}(x;M)$ en \eqref{S_2star} et de celle de $Z^*_\nu(x,p;M)$ en \eqref{defjB}, nous avons 
	\begin{equation}\label{S2stargen}
		S_{\nu,\iota}^{**}(x;M)=\frac{x}{\log x}\sum_{p\in\J_x}\frac{Z_\nu^*(x,p;M)\log p}{p\log u_p}\cdot
	\end{equation}
	Or, la définition de $s_\nu(p,w_p)$ en \eqref{defsnu} ainsi que le développement de $Z_\nu^*(x,p;M)$ issu de \eqref{estZnu} impliquent l'existence d'une suite de fonctions réelles $\{\mathfrak{s}_{\nu;m}(v)\}_{m\in\N}$ et d'un terme principal $\Upsilon(x,p)$ tels que
	\begin{equation}\label{expgenSI2star}
		S_{\nu,\iota}^{**}(x;M)=\sum_{p\in\J_x}\Upsilon(x,p)\bigg\{1+\sum_{1\leq m\leq M}\mathfrak{s}_{\nu;m}(\beta_p)\varepsilon_x^m+O\big(\varepsilon_x^{M+1}\big)\bigg\}.
	\end{equation}
	Il reste à établir les expressions de $\Upsilon(x,p)$ et $\mathfrak{s}_{\nu;1}(v)$. Nous avons, pour $p\in\J_x$,
	\begin{equation}\label{estxigamma}
		\begin{gathered}
			r_{w_p,p}=\frac{1}{\alpha_p}-\frac{\varepsilon_x}{1-\beta_p},\quad \Gamma(1+r_{w_p,p})=\Gamma\Big(1+\frac1{\alpha_p}\Big)\bigg\{1-\frac{\psi(1+1/\alpha_p)}{1-\beta_p}\varepsilon_x+O\big(\varepsilon_x^2\big)\bigg\},\\
			\e^{\gamma(\gr_{w_p,p}-r_{w_p,p})}=\e^{\gamma(\alpha_p-1/\alpha_p)}\Big\{1+\frac{\gamma\varepsilon_x}{1-\beta_p}+O\big(\varepsilon_x^2\big)\Big\}.
		\end{gathered}
	\end{equation}
	Par ailleurs, d'après la définition de $\gS_{\nu,0}(p,t)$ en \eqref{deffraks}, ainsi que des fonctions $h_0$ en \eqref{def_rapports} et $F_\nu$ en \eqref{defFnu}, nous avons
	\begin{equation}\label{expcnu0}
		\gS_{\nu,0}(p,w_p)=h_0(r_{w_p,p})F_\nu(\gr_{w_p,p})=\frac{\e^{\gamma\{\gr_{w_p,p}-r_{w_p,p}\}}\HH_\nu(\gr_{w_p,p})}{\Gamma(1+r_{w_p,p})}\quad(p\in\J_x).
	\end{equation}
	D'après \eqref{defsstar}, \eqref{def_Z_erf}, \eqref{estxigamma} et \eqref{expcnu0}, il vient
	\begin{equation}\label{estsstar1}
		s_{\nu,0}^*(p,w_p)=\frac{K_\nu(\alpha_p)(w_p)^{2w_p+1}}{\Gamma(w_p+1)^2}\bigg\{1+\frac{\gamma+\psi(1+1/\alpha_p)}{1-\beta_p}\varepsilon_x+O\big(\varepsilon_x^2\big)\bigg\}.
	\end{equation}
	La formule de Stirling fournissant enfin
	\begin{equation}\label{estsnustirling}
		\Gamma(w_p+1)^2=2\pi(w_p)^{2w_p+1}\e^{-2w_p}\Big\{1+\frac{1}{6w_p}+O\big(\varepsilon_x^2\big)\Big\}\quad(p\in\J_x),
	\end{equation}
	nous déduisons de \eqref{estZnu}, \eqref{estsstar1} et \eqref{estsnustirling} que
	\begin{equation}\label{s_w}
		\begin{aligned}
			Z_{\nu}^*(x,p;M)&=\frac{K_\nu(\alpha_p)\e^{2w_p}{\sqrt{w_p}}}{2\sqrt{\pi}}\Big\{1+\mathfrak{s}_{\nu;1}(\beta_p)\varepsilon_x+O\big(\varepsilon_x^2\big)\Big\}.
		\end{aligned}
	\end{equation}
	Puisque d'après les estimations \eqref{S_S_star} et \eqref{est_s_2star}, nous avons de plus 
	\[S_{\nu,\iota}(x)=S_{\nu,\iota}^{**}(x;M)\big\{1+O\big(\varepsilon_x^{M+1}\big)\big\},\]
	le résultat annoncé s'ensuit en vertu de \eqref{estZnu}, \eqref{S2stargen}, \eqref{expgenSI2star}, \eqref{s_w} et des définitions de $\kappa(v)$ et $\varrho_\nu(v)$ en \eqref{def_K_wp}.
\end{proof}


Concernant le cas $\nu(n)\equiv 0\,(\modulo 2)$, nous définissons la quantité complémentaire $Z_\nu^{+*}(x,p;M)$ par 
\[Z^{+*}_{\nu}(x,p;M):=\sum_{k\in\K_x}s_\nu^+(p,k)\sum_{0\leq m\leq M}\gS_{\nu,m}(p,k)\varepsilon_x^m\quad(x\geq 3,\,p\in\J_x).\]
Notons qu'en posant, pour $m\in\N$,
    \[H_{p,m}^+(t):=\log s_{\nu,m}^{+*}(p,t)=\log\gr_{t,p}+H_{p,m}(t)\quad(x\geq 3,\,p\in\J_x,\,1\leq t< r_\nu\log_2 x),\]
    les estimations \eqref{estderH} restent valables pour $H_{p,m}^+$ de sorte que $Z_\nu^{+*}(x,p;M)$ admet un développement analogue à \eqref{estZnu} sous la forme
    \[Z_\nu^{+*}(x,p;M)=s_{\nu,0}^{+*}(p,w_p)\sqrt{\pi w_p}\bigg\{\sum_{0\leq m\leq M}\mathfrak{z}_{\nu,m}^+(\beta_p)\varepsilon_x^m+O\big(\varepsilon_x^{M+1}\big)\bigg\}\quad (x\geq 3),\]
    puisque,
    \[\bigg[\frac{\d^j\log \gr_{t,p}}{\d t^j}\bigg]_{t=w_p}\ll\varepsilon_x^j\quad(j\in\N^*,\,p\in\J_x).\] 
    En particulier, l'estimation \eqref{estznul11} persiste en remplaçant $j_{\nu,1}(\alpha_p)$ et $j_{\nu,2}(\alpha_p)$ par $j_{\nu,1}(\alpha_p)+1$ et $j_{\nu,2}(\alpha_p)-1$ respectivement, d'où nous déduisons que
     \[\mathfrak{z}_{\nu,1}^+(x,v)=\mathfrak{z}_{\nu,1}(x,v)+\frac{2j_{\nu,1}(\delta_v)+1}{4\sqrt{v(1-v)}}\quad \big(\tfrac15<v<1\big).\]
Enfin, remarquons que
\[s_{\nu,0}^{+*}(p,w_p)=\gr_{w_p,p}s_{\nu,0}^*(p,w_p)=\alpha_ps_{\nu,0}^*(p,w_p),\]
de sorte qu'en posant
\begin{equation}\label{defvarrho+}
	\varrho_\nu^+(v):=\delta_v\varrho_\nu(v),\quad\mathfrak{s}_{\nu;0}^+(v):=1,\quad \mathfrak{s}_{\nu;1}^+(v):=\mathfrak{s}_{\nu;1}(v)+\frac{2j_{\nu,1}(\delta_v)+1}{4\sqrt{v(1-v)}}\quad\big(\tfrac15<v<1),
\end{equation}
 la somme $S_{\nu,\pi}(x)$ admet un développement analogue à \eqref{eval_S_intermediaire} sous la forme
 \begin{equation}\label{expSnuPinter}
 	S_{\nu,\pi}(x)=\frac{x}{\sqrt{\log_2 x}}\sum_{p\in\J_x}\frac{\varrho_\nu^+(\beta_p)(\log x)^{\kappa(\beta_p)}}{p}\bigg\{\sum_{0\leq m\leq M}\mathfrak{s}_{\nu;m}^+(\beta_p)\varepsilon_x^m+O\big(\varepsilon_x^{M+1}\big)\bigg\}\quad (x\geq 3).
\end{equation}


\section{Preuve du Théorème \ref{theoreme_principal}}

Rappelons la définition de $\varrho_\nu$ en \eqref{def_K_wp} et posons
\[R_\nu(v):=\log \varrho_\nu(v)\quad \big(\tfrac15<v<1).\]
Compte tenu de la Proposition \ref{evalSint}, il nous faut évaluer la somme en $p$ de \eqref{eval_S_intermediaire}. Fixons $M\in\N$, posons, pour $m\in\N$,
    	\begin{equation*}
    		\begin{gathered}
    			w_{\nu,x,m}(p):=\frac{\mathfrak{s}_{\nu;m}(\beta_p)\varrho_\nu(\beta_p)(\log x)^{\kappa(\beta_p)}}{p},\quad w_{\nu,x}(p;M):=\sum_{0\leq m\leq M}w_{\nu,x,m}(p)\varepsilon_x^m\quad(p\in\J_x),
		\end{gathered}
	\end{equation*}
    et écrivons la somme en $p$ de \eqref{eval_S_intermediaire} sous forme intégrale. Il vient
    \begin{align*}
    	J_\nu(x;M)&:=\sum_{p\in\J_x}\frac{\varrho_\nu(\beta_p)(\log x)^{\kappa(\beta_p)}}{p}\sum_{0\leq m\leq M}\mathfrak{s}_{\nu;m}(\beta_p)\varepsilon_x^m=\int_{\J_x}w_{\nu,x}(t;M)\d\pi(t)\\
    	&=\int_{\J_x}w_{\nu,x}(t;M)\d\li(t)+\int_{\J_x}w_{\nu,x}(t;M)\d\{\pi(t)-\li(t)\}=:J_{\nu,1}(x;M)+J_{\nu,2}(x;M),
    \end{align*}
    disons.\par
     Nous traitons $J_{\nu,2}(x;M)$ comme un terme d'erreur. Une forme forte du théorème des nombres premiers fournit $\pi(t)-\li(t)\ll t\e^{-2\sqrt{\log t}}\, (t\geqslant 2)$. Notant par ailleurs que
    \begin{equation}\label{eval_bornes}
    	\kappa\big(\beta_{\exp\{(\log x)^{3/5}\}}\big)=\tfrac{2}{5}(\sqrt 6-1),\quad\kappa\big(\beta_{\exp\{(\log x)^{16/17}\}}\big)=\tfrac{7}{17},
    \end{equation}
    une intégration par parties implique
    \begin{align}\label{eval_J2_intermediaire}
    	J_{\nu,2}(x;M)=\Big[w_{\nu,x}(t;M)\{\pi(t)-\li(t)\}\Big]_{\J_x}-\int_{\J_x}w'_{\nu,x}(t;M)\{\pi(t)-\li(t)\}\d t.
    \end{align}
    Nous déduisons de \eqref{eval_bornes} que le terme entre crochets est
    \begin{equation}\label{J2_1}
    	\ll \big\{(\log x)^{2(\sqrt{6}-1)/5}\e^{-2(\log x)^{3/10}}+(\log x)^{7/17}\e^{-2(\log x)^{8/17}}\big\}\ll \e^{-(\log x)^{3/10}}.
    \end{equation}
    \par 
    Évaluons ensuite l'intégrale de \eqref{eval_J2_intermediaire}. Puisque $\log w_{\nu,x,0}(t)=\log\varrho_\nu(\beta_t)+\kappa(\beta_t)\log_2 x-\log t$, nous pouvons écrire
    \begin{align*}
    	\frac{\d\log w_{\nu,x,0}(t)}{\d t}=-\frac1t+O\Big(\frac{1}{t\log t}\Big).
    \end{align*}
    Ainsi $w'_{\nu,x,0}(t)<0$ pour $t$ assez grand. Puisque nous avons de plus, pour $m\in\N$,
    \[\frac{\d\log \mathfrak{s}_{\nu;m}(\beta_t)}{\d t}\ll \frac{1}{t\log t},\]
    nous déduisons que $w'_{\nu,x,m}(t)<0$ pour $t$ assez grand et donc que $w_{\nu,x}(t;M)$ est décroissante sur $\J_x$. Une nouvelle intégration par parties permet alors de vérifier que l'intégrale de \eqref{eval_J2_intermediaire} est
    \begin{equation}\label{J2_2}
    	\ll \int_{\J_x}w_{\nu,x}(t;M)\e^{-\sqrt{\log t}}\d t+\e^{-(\log x)^{3/10}}\ll \e^{-(\log x)^{3/10}}.
    \end{equation}
    De \eqref{J2_1} et \eqref{J2_2}, nous concluons que 
    \[J_{\nu,2}(x;M)\ll\e^{-(\log x)^{3/10}}.\]
    \par 
    Enfin, nous avons
    \[J_{\nu,1}(x;M)=\int_{\J_x}\frac{w_{\nu,x}(t;M)}{\log t}\d t,\]
    dont nous déduisons, avec le changement de variables $t=\e^{(\log x)^\beta}$, l'estimation
    \begin{equation}\label{somme_S}
    	S_{\nu,\iota}(x)=\Big\{1+O\Big(\varepsilon_x^{M+1}\Big)\Big\}x\sqrt{\log_2 x}\int_{3/5}^{16/17}\varrho_\nu(\beta)(\log x)^{\kappa(\beta)}\bigg\{\sum_{0\leq m\leq M}\mathfrak{s}_{\nu;m}(\beta)\varepsilon_x^m\bigg\}\d\beta.
    \end{equation}
    Afin de simplifier les écritures, définissons, pour $\ell\in\N$,
    	\begin{equation}\label{deftheta}
    		\eta_{\nu,x,\ell}(v):=\mathfrak{s}_{\nu;\ell}(v)\varrho_{\nu}(v)(\log x)^{\kappa(v)},\quad \eta_{\nu,x}(v;M):=\sum_{0\leq \ell\leq M}\eta_{\nu,x,\ell}(v)\varepsilon_x^\ell \quad \big(\tfrac15<v<1\big).
    	\end{equation}
	 La fonction $\kappa(v)$ définie en \eqref{def_K_wp} atteignant son maximum en $\phistar:=\tfrac{\varphi\sqrt{5}}{5}$, nous estimons le rapport $\eta_{\nu,x}(\phistar+v;M)/\eta_{\nu,x}(\phistar;M)$ lorsque $v$ parcourt un intervalle convenable centré à l'origine. Posons, pour $j\in\N^*$,
    	\begin{gather}
    		\tau_{\nu,j}:=\frac{R^{(j)}_\nu(\phistar)}{j!},\quad K_j:=\frac{\kappa^{(j)}(\phistar)}{j!},\quad \Lambda_{\nu,\ell;j}:=\frac1{j!}\bigg[\frac{\d^j\log \mathfrak{s}_{\nu;\ell}(v)}{\d v^j}\bigg]_{v=\phistar},
	\end{gather}
    	de sorte que trois développements de Taylor successifs à l'ordre $2M+3$ fournissent, pour $v$ borné,
    \begin{align}
    	\label{est_rapport_xi}\frac{\varrho_\nu(\phistar+v)}{\varrho_\nu(\phistar)}&=\Big\{1+O\Big(v^{2M+3}\Big)\Big\}\exp\bigg\{\sum_{1\leq j\leq 2M+2}\tau_{\nu,j}v^j\bigg\},\\
	\label{eval_mu}\kappa(\phistar+v)-\kappa(\phistar)&=\sum_{1\leq j\leq 2M+2}K_jv^j+O\Big(v^{2M+3}\Big),\\
	\label{taylorfraks}\frac{\mathfrak{s}_{\nu;\ell}(\phistar+v)}{\mathfrak{s}_{\nu;\ell}(\phistar)}&=\Big\{1+O\Big(v^{2M+3}\Big)\Big\}\exp\bigg\{\sum_{1\leq j\leq 2M+2}\Lambda_{\nu,\ell;j}v^j\bigg\}\cdot
    \end{align}
    Remarquons que $K_2=-\tfrac{5\sqrt{5}}4$ et définissons
	 \[v_x:=\sqrt{\frac{(M+1)\log_3 x}{|K_2|\log_2 x}}\quad(x\geq 16).\]Les estimations \eqref{est_rapport_xi}, \eqref{eval_mu} et \eqref{taylorfraks} impliquent, pour $\ell\in\N$, $|v|\leq v_x$,
    \begin{equation}\label{formule_semi_asymp_eta}
    	\frac{\eta_{\nu,x,\ell}(\phistar+v)}{\eta_{\nu,x,\ell}(\phistar)}=\Big\{1+O\Big(\varepsilon_x^{M+1}\Big)\Big\}\exp\bigg(\sum_{1\leq j\leq 2M+2}\{\tau_{\nu,j}+\Lambda_{\nu,\ell;j}+K_j\log_2 x\}v^j\bigg).
    \end{equation}
    Considérons les intervalles
	\[V:=\Big[\tfrac35-\phistar,\tfrac{16}{17}-\phistar\Big],\quad V_{1,x}:=\big[-v_x,v_x\big]\quad(x\geq 3),\quad V_{2,x}:=V\smallsetminus V_{1,x}\quad(x\geq 3).\] 
	En développant en série le membre de droite de \eqref{formule_semi_asymp_eta}, nous obtenons l'existence, pour tout $\ell\in\N$, d'une suite réelle $\{y_{\nu,x,\ell;j}\}_{j\in\N}$ telle que
	\begin{equation}\label{devetanum}
		\int_{V_{1,x}}\frac{\eta_{\nu,x,\ell}(\phistar+v)}{\eta_{\nu,x,\ell}(\phistar)}\d v=\big\{1+O\big(\varepsilon_x^{M+1}\big)\big\}\int_{V_{1,x}}\e^{-|K_2| v^2\log_2 x}\bigg\{\sum_{0\leq j\leq M}y_{\nu,x,\ell;j}v^{2j}\bigg\}\d v,
	\end{equation}
	où nous avons une nouvelle fois utilisé le fait que, par symétrie du domaine d'intégration, la contribution des termes impairs est nulle. Posons, pour $j\in\N$, 
    	\begin{equation}\label{defId}
       		\mathfrak{I}_{j}(x):=\int_{V_{1,x}}v^{2j}\e^{-|K_2| v^2\log_2 x}\d v=\frac{\Gamma(j+\frac12)\varepsilon_x^{j+1/2}}{|K_2|^{j+1/2}}\Big\{1+O\Big(\varepsilon_x^{M+3/2}\Big)\Big\}\quad(x\geq 3).\\
 	\end{equation}
	En intervertissant somme et intégrale dans le membre de droite de \eqref{devetanum}, nous obtenons, pour $\ell\in\N$, d'une part
	\begin{equation}\label{estV1}
		\int_{V_{1,x}}\frac{\eta_{\nu,x,\ell}(\phistar+v)}{\eta_{\nu,x,\ell}(\phistar)}\d v=\sqrt{\frac{\pi}{|K_2|\log_2 x}}+\sum_{1\leq j\leq M}y_{\nu,x,\ell;j}\mathfrak{I}_j(x)+O\Big(\varepsilon_x^{M+3/2}\Big),
	\end{equation}
	et d'autre part,
	\begin{equation}\label{estV2}
		\int_{V_{2,x}}\frac{\eta_{\nu,x,\ell}(\phistar+v)}{\eta_{\nu,x,\ell}(\phistar)}\d v\ll \varepsilon_x^{M+3/2}.
	\end{equation}
	De plus, au vu des définitions \eqref{deftheta}, nous pouvons écrire
	\[\int_{V}\eta_{\nu,x}(\phistar+v;M)\d v=\sum_{0\leq \ell\leq M}\eta_{\nu,x,\ell}(\phistar)\varepsilon_x^\ell\int_{V}\frac{\eta_{\nu,x,\ell}(\phistar+v)}{\eta_{\nu,x,\ell}(\phistar)}\d v,\]
	de sorte qu'en réarrangeant les termes selon les puissances croissantes de $\varepsilon_x$,  et en remarquant que
	\begin{equation}\label{eval_A}
		\varrho_\nu(\phistar)\sqrt{\frac{\pi}{|K_2|}}=\frac{\varphi\e^{-\gamma}\HH_\nu(\varphi-1)}{\sqrt 5\,\Gamma(1+\varphi)}=:A_{\nu,\iota},
	\end{equation}
	nous obtenons, au vu de \eqref{estV1} et \eqref{estV2}, l'existence d'une suite réelle $\{\ga_{\nu,m}\}_{m\in\N}$ vérifiant
	\begin{equation}\label{resultatimpair}
		S_{\nu,\iota}(x)=A_{\nu,\iota} x(\log x)^{1/\varphi}\bigg\{1+\sum_{1\leq m\leq M}\frac{\ga_{\nu,m}}{(\log_2 x)^m}+O\bigg(\frac{1}{(\log_2 x)^{M+1}}\bigg)\bigg\}\quad(x\geq3).
	\end{equation}\par
	Posons
	\[E(x):=\{\tfrac12\tau_1^2+\tau_2\}\gI_1(x)+\{\tau_1K_3+K_4\}\gI_2(x)\log_2 x+\tfrac12K_3^2\gI_3(x)(\log_2 x)^2\quad(x\geq 3).\]
	Lorsque $m=1$, en remarquant que $K_1=\Lambda_{\nu,0;j}=0\ (j\in\N^*)$, nous obtenons
    \begin{equation}\label{eval_contrib_principale}
    	\begin{aligned}
    		\int_{V_{1,x}}\frac{\eta_{\nu,x,0}(\phistar+v)}{\eta_{\nu,x,0}(\phistar)}\d v&=\big\{1+O\big(\varepsilon_x^2\big)\big\}\bigg(\int_{V_{1,x}}\e^{-|K_2| v^2\log_2 x}\d v +E(x)+O\big(\varepsilon_x^{5/2}\big)\bigg)\\
    		&=\sqrt{\frac{\pi}{|K_2|\log_2 x}}\Big\{1+\Big(\tfrac{\tau_1^2+2\tau_2}{4|K_2|}+\tfrac{3\{\tau_1K_3+K_4\}}{4K_2^2}+\tfrac{15K_3^2}{16|K_2|^3}\Big)\varepsilon_x+O\big(\varepsilon_x^2\big)\Big\}.
    	\end{aligned}
    \end{equation}
    De manière analogue,
    \begin{equation}\label{evalcontribsec}
    	\int_{V_{1,x}}\frac{\eta_{\nu,x,1}(\phistar+v)}{\eta_{\nu,x,1}(\phistar)}\d v=\{1+O(\varepsilon_x)\}\sqrt{\frac{\pi}{|K_2|\log_2 x}}\cdot
    \end{equation}
    Puisque $K_3=-\tfrac{25}8$ et $K_4=-\tfrac{225\sqrt{5}}{64}$, il résulte de \eqref{somme_S}, \eqref{formule_semi_asymp_eta}, \eqref{estV2}, \eqref{eval_contrib_principale} et \eqref{evalcontribsec} que l'on a
    \begin{equation}\label{terme_principal}
    	\begin{aligned}
    		\int_{V}\eta_{\nu,x}(\phistar+v;M)\d v&=\eta_{\nu,x,0}(\phistar)\sqrt{\frac{\pi\varepsilon_x}{|K_2|}}\Big\{1+T\varepsilon_x+O(\varepsilon_x^2)\Big\}\\
		&\mspace{20 mu}+\eta_{\nu,x,1}(\phistar)\varepsilon_x\sqrt{\frac{\pi\varepsilon_x}{|K_2|}}\{1+O(\varepsilon_x)\}+O(\varepsilon_x^{5/2}),
    	\end{aligned}
    \end{equation}
    avec
    \[T:=\frac{\tau_1^2+2\tau_2}{4|K_2|}+\frac{3(\tau_1K_3+K_4)}{4K_2^2}+\frac{15K_3^2}{16|K_2|^3},\]
   et enfin
     \[\ga_{\nu,1}=\mathfrak{s}_{\nu;1}(\phistar)-\tfrac{3\sqrt{5}}{20}-\tfrac3{10}R'_\nu(\phistar)+\tfrac{\sqrt 5}{25}\Big\{R'_\nu(\phistar)^2+R''_\nu(\phistar)\Big\}.\]\par
     Concernant la somme complémentaire $S_{\nu,\pi}(x)$, l'ensemble des estimations obtenues reste valable et, en remplaçant $\varrho_{\nu}(\phistar)$ par $\varrho_{\nu}^+(\phistar)$ dans \eqref{eval_A}, nous obtenons, à partir de \eqref{expSnuPinter}, un développement de $S_{\nu,\pi}(x)$ sous la forme
	\begin{equation}\label{resultatpair}
		S_{\nu,\pi}(x)=\frac{A_{\nu,\iota} x(\log x)^{1/\varphi}}{\varphi}\bigg\{1+\sum_{1\leq m\leq M}\frac{\ga_{\nu,m}^+}{(\log_2 x)^m}+O\bigg(\frac{1}{(\log_2 x)^{M+1}}\bigg)\bigg\}\quad(x\geq3),
	\end{equation}
	de sorte qu'en sommant \eqref{resultatimpair} et \eqref{resultatpair}, l'estimation \eqref{estlogpm} s'ensuit en posant, pour $m\in\N$,
	\[\gc_{\nu,m}:=\frac{\varphi \ga_{\nu,m} +\ga_{\nu,m}^+}{\varphi+1}\cdot\]
    Enfin, pour $m=1$, au vu de la définition de $\varrho_\nu^+$ en \eqref{defvarrho+}, nous avons
   \[R_\nu^{+\prime}(\phistar)=R'_\nu(\phistar)-\tfrac52,\quad R_\nu^{+\prime\prime}(\phistar)=R''_\nu(\phistar)-\tfrac{5\sqrt{5}}2,\]
   de sorte que
   	\[\ga_{\nu,1}^+=\ga_{\nu,1}+\tfrac12\varphi-\tfrac{\sqrt 5}5R'_\nu(\phistar)+\tfrac{\sqrt 5}{4}\{2j_{\nu,1}(\varphi-1)+1\}.\]
Nous obtenons ainsi l'estimation analogue
  	\begin{equation}\label{estSnuPf}
    		S_{\nu,\pi}(x)=\frac{A_{\nu,\iota}x(\log x)^{1/\varphi}}{\varphi}\big\{1+\ga_{\nu,1}^+\varepsilon_x+O\big(\varepsilon_x^2\big)\big\}\quad(x\geq 3).
	\end{equation}    
Il vient alors
   	\begin{equation}\label{valeurbnu1}
    		\begin{aligned}
    			\gc_{\nu,1}&=\frac{\varphi\ga_{\nu,1}+\ga_{\nu,1}^+}{\varphi+1}\\
			&=\mathfrak{s}_{\nu;1}(\phistar)+\tfrac{19\sqrt 5-35}{40}+\tfrac{2-3\sqrt 5}{10}R'_\nu(\phistar)+\tfrac{\sqrt 5}{25}\{R'_\nu(\phistar)^2+R''_\nu(\phistar)\}+\tfrac{3\sqrt 5-5}4j_{\nu,1}(\varphi-1).
		\end{aligned}
	\end{equation}
	Cela termine la démonstration.\qed\bigskip

\noindent{\it Remerciements.} L'auteur tient à remercier chaleureusement le professeur Gérald Tenenbaum pour l'ensemble de ses conseils et remarques avisés ainsi que pour ses relectures attentives durant la réalisation de ce travail.


\begin{thebibliography}{}
	\bibitem{alladi}
	K. Alladi,
	The distribution of $\nu(n)$ in the sieve of Eratosthenes, \textit{Quart. J. Math. Oxford Ser.} (2) \textbf{33} (1982), no. 130, 129-148. MR657120.
	\bibitem{pollack}
	N. McNew, P. Pollack {\&} A. Singha Roy,
	The distribution of intermediate prime factors, \textit{Illinois J. Math.} 68 (3) 537-576, September 2024.
	\bibitem{tenenbaum_erdos}
	P. Erd\H os {\&} G. Tenenbaum,
	Sur les densités de certaines suites d'entiers, \textit{Proc. London Math. Soc.} (3) 59 (1989) 417-438.	
	\bibitem{hall_tenenbaum}
	R.R. Hall {\&} G. Tenenbaum,
	\it Divisors, \rm Cambridge Tracts in Mathematics 90, Cambridge University Press, Cambridge (1988)
	\bibitem{tenenbaum_livre}
	G. Tenenbaum,
	\it Introduction à la théorie analytique et probabiliste des nombres, \rm
	5ème édition, Dunod, 2022.
	\bibitem{tenenbaum_solutions}
	G. Tenenbaum \& J. Wu (coll.),
	\it{Théorie analytique et probabiliste des nombres : 313 exercices corrigés}, \rm Le Voile des mots, 2024.
	
\end{thebibliography}
\end{document}